\newtheorem{dfn}{Definition}[section]
\newtheorem{thm}[dfn]{Theorem}
\newtheorem{lem}[dfn]{Lemma}
\newtheorem{corollary}[dfn]{Corollary}
\newtheorem{conjecture}[dfn]{Conjecture}
\newtheorem{Claim}[dfn]{Claim}
 \newcommand{\dF}{{\mathcal{F}}}
  \newcommand{\dG}{{\mathcal{G}}}  
  \newcommand{\dH}{\mathcal{H}}
\newcommand{\dK}{\mathcal{K}}
\newcommand{\dP}{{\mathcal{P}}}
\newcommand{\dS}{{\mathcal{S}}}
\title{Maxmum Size of a Uniform Family with Bounded VC-dimension}
\author{Tianchi Yang\footnote{School of Mathematics, Georgia Institute of Technology, Atlanta, GA, USA}~~~~ Xingxing Yu \footnote{School of Mathematics, Georgia Institute of Technology, Atlanta, GA, USA. Partially supported by NSF grant DMS-2348702.}}
\date{}
\begin{document}

\maketitle

\begin{abstract} 
In 1984, Frankl and Pach \cite{84FP} proved that, for positive integers $n$ and $d$, the maximum size of a $(d+1)$-uniform set family $\mathcal{F}$ on an $n$-element set with  VC-dimension at most $d$ is at most ${n\choose d}$; and they suspected that ${n\choose d}$ could be replaced by ${n-1\choose d}$, which would generalize the famous Erd\H{o}s-Ko-Rado theorem and was mentioned  by Erd\H{o}s \cite{84Er} as Frankl--Pach conjecture.  
However, Ahlswede and Khachatrian \cite{97AK} in 1997 constructed $(d+1)$-uniform families on an $n$-element set with VC-dimension at most $d$ and size exactly $\binom{n-1}{d}+\binom{n-4}{d-2}$, and Mubayi and Zhao \cite{07MZ} in 2007 constructed more such families.  It has since been an open question to narrow the gap  between the lower bound $\binom{n-1}{d}+\binom{n-4}{d-2}$ and the upper bound ${n\choose d}$. In a recent breakthrough, Chao, Xu, Yip, and Zhang \cite{CXYZ} reduced the upper bound $\binom{n }{d}$ to $ \binom{n-1}{d}+O( n^{d-1-\frac{1}{4d-2}})$. In this paper, we further reduce the upper bound to $\binom{n-1}{d} + O(n^{d-2})$, 
asymptotically matching the lower bound $\binom{n-1}{d}+\binom{n-4}{d-2}$. 
\end{abstract}


\section{Introduction}
 
A fundamental result in extremal set theory is the Erd\H{o}s--Ko--Rado theorem \cite{61EKR}: For positive integers $n,k$ with $n\ge 2k$, ${n-1\choose k-1}$ is the maximum size of an intersecting family of $k$-subsets of $[n]$, and when $n>2k$ all maximum intersecting families are trivially intersecting. An attempt to generalize this result was initiated by Frankl and Pach \cite{84FP} and mentioned by Erd\H{o}s \cite{84Er}, by considering the maximum size of a uniform set family on a given set and with bounded VC-dimension.

Given a family of sets $\dF\subseteq 2^{[n]}$, the {\em VC-dimension} of $\dF$, denoted by VC$(\dF)$, is the largest integer $d$ for which there exists a set $S\in  {[n]\choose d}$ that is shattered by $\dF$. A set $S$ is {\em shattered} by $\dF$ if, for every subset $A\subseteq S$, there exists some $F\in \dF$ such that $F\cap S=A$.

The concept of VC-dimension was introduced and studied independently and  simultaneously in the early 1970s by Perles–Shelah  \cite{72Sh}, Sauer \cite{72Sa}, and
Vapnik and Chervonenki \cite{71VC}. There has been recent progress on several difficult problems in combinatorics for set families with bounded VC-dimension, such as the Erd\H{os}-Hajnal conjecture \cite{19FPS, 23NSS}, the Schur-Erd\H{o}s conjecture \cite{21FPS}, and bounds on sunflowers in set systems \cite{23FPS, 24BBDFP}. 
VC-dimension is also a fundamental concept in learning theory, 
see for example \cite{15MSWY, 19BH}, 
that provides a formal measure of complexity/capacity of a machine learning model.  (VC dimension helps to quantify how flexible a model is in its ability to fit diverse patterns in data and establish theoretical bounds on the amount of training data needed to ensure that the model can generalize effectively.)

Let $\dF \subseteq \binom{[n]}{d+1}$ be a $(d+1)$-uniform family. 
Frankl and Pach \cite{84FP} showed that if  VC$(\dF)=k\le d$ then $|\dF|\le \binom{n}{k}$. 
They felt that the following could be true: For sufficiently large $n$, if  VC$(\dF)\le d$ then $|\dF|\le \binom{n-1}{d}$. Erd\H{o}s remarked \cite{84Er} (also see \cite{84FP, 94FW}) that this Frankl-Pach conjecture would be a beautiful generalization of the Erd\H{o}s-Ko-Rado theorem.
However, Ahlswede and Khachatrian \cite{97AK} constructed, for $n\ge 2(d+1)$, a $(d+1)$-uniform  family of size $\binom{n-1}{d}+\binom{n-4}{d-2}$ with VC dimension $d$. 
Mubayi and Zhao \cite{07MZ} provided more such families, and believed that $\binom{n-1}{d}+\binom{n-4}{d-2}$ is the right upper bound.

For nearly four decades, the Frankl-Pach bound ${n\choose d}$ remained as the best upper bound on the size of a subfamily of ${[n]\choose d+1}$ with VC-dimension at most $d$. Mubayi and Zhao \cite{07MZ} improved it by an additive factor of $\log n$ for prime power $d$ and large $n$, using linearly independent vectors and a sunflower lemma of Erd\H{o}s and Rado~\cite{60ER} (see Lemma~\ref{lem: sunflower} below).  In 2024,  Ge, Xu, Yip, Zhang, and Zhao \cite{24GXYZZ} improved the Frankl-Pach bound from $\binom{n}{d}$ to $\binom{n}{d}-1$, for all $d\ge 2$ and $n\ge 2d+2$, by using multilinear polynomials instead of vectors. 
In a recent breakthrough,  Chao, Xu, Yip, and Zhang \cite{CXYZ} used a combinatorial approach to show that $|\dF|\le \binom{n-1}{d}+O(n^{d-1-\frac{1}{4d-2}}).$ (We use the standard little $o$ and big $O$ notation.) 
 In this paper, we prove the following result, which provides an upper bound that asymptotically matches the lower bound.

\begin{thm}\label{thm: main}
    Let $n,d$ be positive integers, and let $\dF\subseteq \binom{[n]}{d+1}$ be a maximum size family with VC$(\dF) \le d$. Then
    $$
    \binom{n-1}{d}+\binom{n-4}{d-2} \le |\dF|\le \binom{n-1}{d} + O(n^{d-2}).
    $$
\end{thm}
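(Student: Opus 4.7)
The lower bound is witnessed by the Ahlswede--Khachatrian family: fix $v^{\ast}\in[n]$ and a $3$-set $T\subseteq[n]\setminus\{v^{\ast}\}$, and take all $(d+1)$-subsets containing $v^{\ast}$ together with all $(d+1)$-sets of the form $T\cup R$ with $R\in\binom{[n]\setminus(T\cup\{v^{\ast}\})}{d-2}$; a direct check gives size $\binom{n-1}{d}+\binom{n-4}{d-2}$ and VC-dimension at most $d$.

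For the upper bound, my plan is first to apply the standard shifting operations so that $\mathcal{F}$ may be assumed shifted with respect to the natural order on $[n]$; this preserves $|\mathcal{F}|$ and, by a classical result of Frankl, does not increase the VC-dimension. I would then partition $\mathcal{F}=\mathcal{F}_{1}\sqcup\mathcal{F}_{0}$ according to whether $1\in F$; since $|\mathcal{F}_{1}|\le\binom{n-1}{d}$ holds trivially, the theorem reduces to showing $|\mathcal{F}_{0}|=O(n^{d-2})$. The key structural leverage is that shiftedness forces $\mathcal{F}_{1}$ to interact richly with $\mathcal{F}_{0}$: for every $F\in\mathcal{F}_{0}$ and every $u\in F$, the set $(F\setminus\{u\})\cup\{1\}$ lies in $\mathcal{F}_{1}$, so $\mathcal{F}_{1}$ automatically realizes many traces on $(d+1)$-sets containing $1$, leaving little room for $\mathcal{F}$ to avoid shattering.

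To convert this into the sharp bound I would argue by contradiction: assume $|\mathcal{F}_{0}|\ge C_{d}\,n^{d-2}$ for a suitably large constant $C_{d}$, and apply the Erd\H{o}s--Rado sunflower lemma (Lemma~\ref{lem: sunflower}) inside $\mathcal{F}_{0}$ to extract a sunflower $\{F_{1},\dots,F_{k}\}$ with arbitrarily many petals and core $Y\subseteq[n]\setminus\{1\}$ of size at most $d-2$ (cores of size $\ge d-1$ can be handled by a direct count, since they contribute only $O(n^{d-2})$ sets in total). I would then select a candidate $(d+1)$-set $S=\{1\}\cup Y\cup P$, where $P$ contains one representative from each of $d-|Y|$ carefully chosen petals, and verify that every subset $A\subseteq S$ is realized as $F\cap S$ for some $F\in\mathcal{F}$: traces with $1\in A$ come from $\mathcal{F}_{1}$ via the shifted companions above, while traces with $1\notin A$ come from sunflower members by choosing petal representatives that lie in $A$ and fresh petals for those outside $A$, so that incidental intersections are avoided. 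This shatters $S$, contradicting $\mathrm{VC}(\mathcal{F})\le d$.

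The main obstacle will be the trace-realization step: choosing the petal representatives in $P$ so that simultaneously (i) every trace $A\not\ni 1$ is realized by a single sunflower member whose intersection with $S$ is exactly $A$, and (ii) every trace $A\ni 1$ is hit by some shifted companion, with no unwanted incidental intersections between petal elements and $Y$ or among different petals. The most delicate case is $|Y|=d-2$ with petals of size $3$, which matches the Ahlswede--Khachatrian configuration; here the sunflower lemma alone appears to lose a constant factor, so closing the gap will likely require an auxiliary structural claim---for instance, that after further compressions the petals of any critical sunflower share a common $3$-element intersection---or an induction on $d$ that exploits the theorem at parameter $d-1$ applied to the link at $1$.
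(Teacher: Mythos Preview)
Your proposal is a plan rather than a proof, and it has two genuine gaps. First, the shifting step: you attribute to Frankl a ``classical result'' that shifting does not increase VC-dimension, but no such result is standard. Frankl's compression machinery preserves intersection conditions and certain forbidden-trace properties, but preservation of bounded VC-dimension is not among the known consequences and would itself require an argument; if it fails, your reduction to bounding $|\dF_0|$ collapses. Second, and more seriously, even granting shifting, the bound $|\dF_0|=O(n^{d-2})$ is only sketched. The sunflower lemma gives no control over the core size, so your dichotomy ``core $\le d-2$ versus core $\ge d-1$'' does not follow from it. Your assertion that large cores ``contribute only $O(n^{d-2})$ sets in total'' is unexplained and appears false as stated: for $d=3$ a single sunflower in $\dF_0$ with core of size $d-1=2$ can already have $\Theta(n^2)$ petals, far exceeding $O(n)=O(n^{d-2})$. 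And for small cores you explicitly concede that the trace-realization step ``will likely require an auxiliary structural claim'' or ``an induction on $d$,'' neither of which you supply. In effect the entire technical content of the upper bound has been deferred.

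The paper's proof takes a completely different route and does not shift. It works directly with maximum $\dF$-certificates $c_\dF(F)$ and partitions $\dF$ by certificate size: sets with $|c_\dF(F)|\le d-2$ contribute $O(n^{d-2})$ via Lemma~\ref{clm: T-F-number}; a carefully selected subfamily $\dP\subseteq\dF_{d-1}$ of ``bad pairs'' is bounded by $0.1\,|\overline{\partial\dF}|+O(n^{d-2})$ through a double-counting argument (Lemma~\ref{clm: P size}); and the remainder $\dF^3$ is injected, via an explicit fractional assignment followed by a rounding step (Lemmas~\ref{lem: f G'} and~\ref{lem: map G' to 1}), into the standard basis of $\mathbb{R}^{|\dS|}$ with $\dS=\binom{V}{d-1}\cup\bigl(\binom{V}{d}\setminus\overline{\partial\dF^3}\bigr)$ for some $V\in\binom{[n]}{n-2}$. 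The point is that $|\dS|\le\binom{n-1}{d}-(1-o(1))|\overline{\partial\dG}|$, and this $|\overline{\partial\dG}|$ saving is precisely what absorbs the $0.1\,|\overline{\partial\dF}|$ error from the earlier step to leave a clean $O(n^{d-2})$. Nothing analogous to this injection or to the shadow-complement bookkeeping appears in your outline.
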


This result gives a substantial improvement over the previous upper bounds, as the error term in the upper bound now matches the order of magnitude of the second term in the lower bound.  To describe our proof strategy for Theorem~\ref{thm: main}, we need some notation. 
 For a set family $\dF$, we will use $\partial F$ to denote the {\it shadow} of $\dF$, i.e., 
 \[\partial \dF:=\{S: S\subseteq F \mbox { and } |S|=|F|-1 \mbox{ for some } F\in \dF\}.\]

Let $n,d$ be positive integers and let $\dF\subseteq \binom{[n]}{d+1}$ with VC$(\dF) \le d$. Then, for any $\dG\subseteq \dF$, VC$(\dG)\le$ VC$(\dF)$, since if $\dG$ shatters a set $F$ then so does $\dF$. Moreover,  for each $F\in\dF$, $F$ is not shattered by $\dF$; so    there exists a subset $T\subseteq F$ such that  $F'\cap F\neq T$ for all $F'\in \dF$. 
We call such $T$ an \emph{$\dF$-certificate} of $F$, which is a proper subset of $F$ and  need not be unique. A {\em maximum} $\dF$-certificate of $F$ is an  $\dF$-certificate of $F$ with the maximum size. Let $c_{\dF}:\dF \to \binom{[n]}{\le d}$ denote any  function such that, for each $F\in \dF$, $c_{\dF}(F)$ is a maximum $\dF$-certificate of $F$. 
Then, for every $T\in {[n]\choose \le d}$, $c_{\dF}^{-1}(T)\subseteq \{F\in \dF:T\subseteq F\}$. 
For $k\in [d]$, let $\dF_k:=\{F\in \dF: |c_{\dF}(F)|=k\}$. Note that $\dF_k$ is independent of the choice of the assignment function $c_{\dF}$.

In our proof of Theorem \ref{thm: main}, we will partition $\dF$ into three subfamilies $\dF^i$, $i\in [3]$, such that
\begin{itemize}
    \item $\dF^1=\dF_{\le d-2}\cup \dP$ and $|\dF^1|\le 0.1|\overline {\partial \dF}|+O(n^{d-2})$, where $\dF_{\le d-2}:=\bigcup_{k=1}^{d-2}\dF_k$, $\dP$ is some subfamily of $\dF_{d-1}$, and $\overline{\partial \dF}={[n]\choose d}\setminus \partial \dF$;     
    \item $\dF^2$ consists of sets $F$ in $\dG:=\dF\setminus \dF^1$ satisfying certain constraints on $c_{\dG}(F)$, and $|\dF^2|=O(n^{d-2})$;
    \item there is an injection from $\dF^3:=\dF\setminus (\dF^1\cup \dF^2)$ to the standard basis of the vector space $ \mathbb{R}^m$, for some appropriate $m\le {\binom{n-1}{d}}$. 
\end{itemize}

In Section 2, we explain how $\dP$ (and hence $\dF^1$) is chosen such that $|\dF^1|\le 0.1|\overline{\partial \dF}|+ O(n^{d-2})$ and  $\dG:=\dF\setminus \dF^1$ has some desired properties, see Lemma~\ref{lem: good subfamily}. We then prove Lemma~\ref{lem: final refinement} which will be used to identify the subfamily $\dF^2\subseteq \dG$.
In Section 3, we prove Lemma~\ref{lem: f G'} and define a function $f$ from $\dF^3$ to the standard basis of $ \mathbb{R}^m$ for some $m\le \binom{n-1}{d}$. In Section 4, we modify $f$ to obtain the desired injection, see Lemma~\ref{lem: map G' to 1}.  We then complete the proof of Theorem~\ref{thm: main} and  mention several related open problems.



\section{Construction of $\dF^1$ and  $\dF^2$}

Let $n,d$ be positive integers, let $\dF\subseteq \binom{[n]}{d+1}$ with VC$(\dF) \le d$, and let $c_{\dF}$ be an assignment of maximum $\dF$-certificates. In this section, we describe the family $\dF^1$ in the partition of $\dF=\dF^1\cup \dF^2\cup \dF^3$ mentioned in Section 1.  To do this we need to consider   $|c_{\dF}^{-1}(T)|$ for $T\in  {[n]\choose \le d}$, which is the number of sets in $\mathcal{F}$ with $T$ as a maximum $\dF$-certificate under $c_{\dF}$. Using a bound from \cite{24BBDFP} on sizes of sunflowers in set families with bounded VC-dimension, Chao {\em et al}  proved that $|c_{\dF}^{-1}(T)|$ is bounded from above by a constant dependent only on $d$, which is Claim 3.2 in \cite{CXYZ} and stated below. 

\begin{lem}  [Chao, Xu, Yip, and Zhang]\label{clm: T-F-number}
Let $n,d$ be positive integers, let $\dF\subseteq \binom{[n]}{d+1}$ with VC$(\dF) \le d$, and let $c_{\dF}$ be an assignment of maximum $\dF$-certificates of the sets in $\dF$.  Then, for all $T \in {[n]\choose \le d}$,  $|c_{\dF}^{-1}(T)|=O(1)$.
\end{lem}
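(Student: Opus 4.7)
The plan is to argue by contradiction, so suppose $|c_{\dF}^{-1}(T)| \geq M$ for a constant $M = M(d)$ to be chosen. The first step is to apply the sunflower-type bound for bounded VC-dimension families from \cite{24BBDFP} to the subfamily $c_{\dF}^{-1}(T)$, which is $(d+1)$-uniform and inherits VC-dimension at most $d$ from $\dF$. For $M$ sufficiently large in terms of $d$, this yields a sunflower $\{F_1, \dots, F_s\} \subseteq c_{\dF}^{-1}(T)$ with core $Y$ and an arbitrarily large number $s$ of petals.

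Since every $F_i$ contains $T$, we have $T \subseteq Y$. If $Y = T$, then any two distinct petals satisfy $F_i \cap F_j = T$, and taking $F' := F_j \in \dF$ directly contradicts that $T$ is an $\dF$-certificate of $F_i$. So we may assume $Y \supsetneq T$. Because $T$ is a \emph{maximum} $\dF$-certificate of each $F_i$, the strictly larger subset $Y \subseteq F_i$ cannot itself be an $\dF$-certificate of $F_i$; hence for each $i$ there exists $F'_i \in \dF$ with $F'_i \cap F_i = Y$. In particular $Y \subseteq F'_i$ and $F'_i$ is disjoint from the petal $P_i := F_i \setminus Y$, so $F'_i$ has exactly $d+1 - |Y|$ elements outside $Y$.

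The main obstacle is to derive a contradiction from this configuration $\{F_i, F'_i\}_{i \in [s]}$. I expect to do so by shattering a $(d+1)$-element set in $\dF$, thereby contradicting VC$(\dF) \le d$. The idea is that the sunflower structure forces the $F'_i$'s to have very constrained ``crossing patterns'' on the petals: since the petals are pairwise disjoint and each $F'_i$ has only $d+1-|Y|$ elements outside $Y$, a pigeonhole argument over the $s$ petals and the $O_d(1)$ possible crossing types lets one pass to a sub-sunflower of controlled size together with a consistent system of $F'_i$'s whose extra elements hit the chosen petals in a prescribed way. Combined with repeated appeals to the maximum-certificate property — which ensures that every subset $T' \subsetneq F_i$ strictly containing $T$ is realized as $F \cap F_i$ for some $F \in \dF$ — I would then aim to exhibit a $(d+1)$-subset of some $F_i$ all of whose subsets are realized by members of $\dF$, producing the desired shattered set. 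The delicate point, and the place where the quantitative strength of the bound in \cite{24BBDFP} is essential, is the pigeonhole extraction that produces a sub-sunflower of sufficient size alongside a controlled system of $F'_i$'s.
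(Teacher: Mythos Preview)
Your setup is fine through the sunflower step and the observation that $T\subsetneq Y$. The gap is in what you do next. You apply the maximum-certificate property to the \emph{core} $Y$, obtaining $F'_i$ with $F'_i\cap F_i=Y$; but this tells you nothing useful, since $Y\neq T$ and so $F'_i\cap F_j=Y$ (for some other $j$, which a pigeonhole would indeed give) violates no certificate condition. You then try to recover by aiming to shatter a $(d+1)$-subset of some $F_i$, but this cannot work from the certificate hypothesis alone: maximality of $T$ only guarantees that subsets $T'$ with $T\subsetneq T'\subsetneq F_i$ are realized as $F\cap F_i$; it says nothing about subsets of $F_i$ that do \emph{not} contain $T$ (e.g.\ the empty set), and those are needed to shatter $F_i$. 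So the shattering plan, and the elaborate pigeonhole/sub-sunflower extraction built around it, does not close.

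The missing idea is to apply maximality not to $Y$ but to $T\cup P_1$, where $P_1=F_1\setminus Y$ is a petal. Since $T\subsetneq Y$, this set is a proper subset of $F_1$ strictly larger than $T$, hence not a certificate; so there is $F\in\dF$ with $F\cap F_1=T\cup P_1$, which forces $F\cap Y=T$. Now a single pigeonhole on the $d+2$ pairwise disjoint nonempty petals (against the $d+1$ elements of $F$) produces $j$ with $F\cap P_j=\emptyset$, whence $F\cap F_j=T=c_{\dF}(F_j)$, the contradiction. This needs only a $(d+2)$-sunflower, so the classical Erd\H{o}s--Rado lemma suffices and the bounded-VC sunflower bound from \cite{24BBDFP} is not required.
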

For the sake of completeness, we restate their proof which uses Lemma~\ref{lem: sunflower}, the famous sunflower lemma of Erd\H{o}s and Rado \cite{60ER}. (The bound in Lemma~\ref{lem: sunflower} was improved by Alweiss, Lovett, Wu, and Zhang \cite{21ALWZ} and then by Bell, Chueluecha, and Warnke \cite{21BCW}.) For a positive integer $p$, a {\it $p$-sunflower} is a family of $p$ sets $G_1,\ldots, G_p$ with the property that there exists some set $S$ such that $G_i\cap G_j=S$ for $i,j\in [p]$ with $i\ne j$. 

\begin{lem}[Erd\H{o}s-Rado Sunflower Lemma]\label{lem: sunflower}
Let $k,p$ be positive integers. If $\dF$ is a family of $k$-sets of size $|\dF|\ge k!(p-1)^k$, then $\dF$ contains a $p$-sunflower.
\end{lem}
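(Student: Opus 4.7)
The plan is to argue by contradiction using the Erd\H{o}s--Rado Sunflower Lemma (Lemma~\ref{lem: sunflower}). Suppose $|c_{\dF}^{-1}(T)|\ge N$ for some threshold $N=N(d)$ to be specified. Let $k:=|T|\le d$ and consider the $(d+1-k)$-uniform family $\dF_T:=\{F\setminus T : F\in c_{\dF}^{-1}(T)\}$. Taking $N\ge (d+1-k)!\,(p-1)^{d+1-k}$ for an integer $p=p(d)$ to be specified, Lemma~\ref{lem: sunflower} produces a $p$-sunflower $G_1,\ldots,G_p$ in $\dF_T$ with some core $S'\subseteq [n]\setminus T$. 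Writing $F_i:=T\cup G_i\in c_{\dF}^{-1}(T)$ and $P_i:=G_i\setminus S'$, the family $\{F_1,\ldots,F_p\}$ forms a sunflower in $\dF$ with core $T\cup S'$, pairwise disjoint petals $P_1,\ldots,P_p$, and petal size $r:=d+1-k-|S'|$. The first key observation is that $S'\ne\emptyset$: otherwise $F_i\cap F_j=T$ for $i\ne j$, contradicting that $T$ is an $\dF$-certificate of $F_i$ (since $F_j\in\dF$ and $F_j\ne F_i$). Hence $|S'|\ge 1$; note also that $r\ge 1$, for otherwise $F_1=\ldots=F_p$, contradicting $p\ge 2$.

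The main combinatorial step is to exhibit a $(d+1)$-set shattered by $\dF$, contradicting VC$(\dF)\le d$. Pick petal representatives $x_i\in P_i$ for $i\in[r]$ and set $S^*:=T\cup S'\cup\{x_1,\ldots,x_r\}$, so $|S^*|=d+1$. Several patterns on $S^*$ are realized directly by sunflower members: $F_i\cap S^*=T\cup S'\cup\{x_i\}$ for $i\in[r]$, and $F_j\cap S^*=T\cup S'$ for $j>r$. To realize the remaining $A\subseteq S^*$, I invoke the maximality of $T$ as an $\dF$-certificate of each $F_i$: every $T''\subseteq F_i$ with $|T''|>k$ (including those with $T\not\subseteq T''$) must admit a realizer $F^{T''}\in\dF$ with $F^{T''}\cap F_i=T''$, since otherwise $T''$ itself would be an $\dF$-certificate of $F_i$ of size exceeding $|T|=k$. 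Combining such realizers drawn from different $F_i$'s with the many disjoint petals of the sunflower should allow us to match every target pattern $A\subseteq S^*$.

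The main obstacle is controlling the ``collateral'' intersections of these realizers with the labeled petals $P_1,\ldots,P_r$ not targeted by the realizer's home $F_i$, especially for patterns $A\subseteq S^*$ with $T\not\subseteq A$, where the realizer must partially miss $T$ and hit a prescribed subset of $\{x_1,\ldots,x_r\}$. Since each realizer $F\in\dF$ has $|F|=d+1$ elements and the $P_j$'s are disjoint, each realizer meets at most $d+1$ of the $p$ petals nontrivially. Hence choosing $p=p(d)$ much larger than $2^{d+1}(d+1)$ permits a pigeonhole/relabeling argument that consistently selects representatives $x_1,\ldots,x_r$ and realizers $F^A\in\dF$ so that $F^A\cap S^*=A$ for every $A\subseteq S^*$; any unwanted petal hits can be absorbed into unused petal indices beyond position $r$. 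This produces a shattering of $S^*$, contradicting VC$(\dF)\le d$ and yielding $|c_{\dF}^{-1}(T)|<N(d)=O_d(1)$.
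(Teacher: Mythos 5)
Your proposal is not a proof of the statement in question. The statement to be proved is Lemma~\ref{lem: sunflower}, the classical Erd\H{o}s--Rado Sunflower Lemma itself: a purely combinatorial fact about $k$-uniform families, stated with no reference to VC-dimension or certificates. Your proposal instead \emph{invokes} Lemma~\ref{lem: sunflower} as a black box in its first paragraph and sets out to prove the bound $|c_{\dF}^{-1}(T)|=O(1)$, which is Lemma~\ref{clm: T-F-number} in the paper. A proof of the sunflower lemma cannot begin by citing the sunflower lemma; you have proved (or attempted to prove) a different result. Note that the paper itself gives no proof of Lemma~\ref{lem: sunflower}, citing it from Erd\H{o}s--Rado (1960); the standard argument is a short induction on $k$ via a maximal collection of pairwise disjoint members of $\dF$, and you should supply that if a proof is wanted.

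Even read as an attempted proof of Lemma~\ref{clm: T-F-number}, your argument is substantially more complicated than, and ultimately weaker than, the paper's. The paper takes a $(d+2)$-sunflower $G_1,\ldots,G_{d+2}$ inside $c_{\dF}^{-1}(T)$ with core $A\supsetneq T$ and petals $B_i=G_i\setminus A$, observes that since $T$ is a \emph{maximum} $\dF$-certificate of $G_1$ the strictly larger set $T\cup B_1$ is not a certificate, so some $F\in\dF$ has $F\cap G_1=T\cup B_1$, hence $F\cap A=T$; then since $|F|=d+1$ and the $d+2$ petals $B_j$ are pairwise disjoint, some $B_j$ misses $F$, giving $F\cap G_j=T=c_{\dF}(G_j)$, a contradiction. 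This is a two-step argument with no shattering and no case analysis. Your proposal, by contrast, aims to build a shattered $(d+1)$-set $S^*$ and confesses an unresolved ``main obstacle'' about controlling collateral intersections of realizers with petals, resolved only by a sketchy pigeonhole/relabeling handwave. As written that step is a genuine gap: you never verify that all $2^{d+1}$ patterns on $S^*$ can be realized simultaneously with a consistent choice of representatives $x_1,\ldots,x_r$, and the patterns $A$ with $T\not\subseteq A$ need realizers that are not supplied by the maximality of $T$ as a certificate of any single $F_i$.
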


\begin{proof}[Proof of Lemma~\ref{clm: T-F-number}]
    Assume $|c_{\dF}^{-1}(T)|\ge (d+1)!(d+2-1)^{d+1}$ (which we take as the $O(1)$ term in Lemma~\ref{clm: T-F-number}).  Then by  Lemma~\ref{lem: sunflower}, $c_{\dF}^{-1}(T)$ contains a $(d+2)$-sunflower, say $\{G_1,G_2,\cdots,G_{d+2}\}$.
    Let $A=\cap_{i\in[d+2]}G_i$; so $T$ is a proper subset of $A$. For each $i\in[d+2]$, let   $B_i=G_i\setminus A$. By the definition of sunflower, $B_1, \ldots, B_{d+2}$ are pairwise disjoint. 
    Consider the set $G_1=A\cup B_1$. Since $T\cup B_1$ is not a certificate of $G_1$, there exists an $F\in \dF$ with $F\cap G_1=T\cup B_1$. It follows that $F\cap A=T$. Since $B_1, \ldots, B_{d+2}$ are pairwise disjoint, we can find an index $j\in [d+2]$ such that  $B_j\cap F=\emptyset$. Therefore, $F\cap G_j=F\cap A=T=c_{\dF}(G_j)$, which is a contradiction.
\end{proof}

Recall the notation $\dF_k=\{F\in \dF:   |c_{\dF}(F)| = k \}$ and $\dF_{\le d-2} = \bigcup_{k \le d-2} \dF_k$. We now use
Lemma~\ref{clm: T-F-number} to bound the size of $\dF_{\le d-2}$. 

\begin{lem}\label{clm: F2 size}
   Let $n,d$ be positive integers and let $\dF\subseteq \binom{[n]}{d+1}$ with VC$(\dF) \le d$. Then  $|\dF_{\le d-2}| = O(n^{d-2})$.
\end{lem}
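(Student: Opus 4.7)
The plan is very short: apply Lemma~\ref{clm: T-F-number} once and count.

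For each $k\in[d-2]$ the map $c_{\dF}$ restricted to $\dF_k$ takes values in $\binom{[n]}{k}$, so
\[
|\dF_k|=\sum_{T\in\binom{[n]}{k}}|c_{\dF}^{-1}(T)|.
\]
By Lemma~\ref{clm: T-F-number}, each fiber $|c_{\dF}^{-1}(T)|$ is bounded by a constant depending only on $d$. Hence $|\dF_k|\le \binom{n}{k}\cdot O(1)=O(n^k)$.

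Summing over $k=1,\dots,d-2$ and noting that the dominant term is at $k=d-2$, I would conclude
\[
|\dF_{\le d-2}|=\sum_{k=1}^{d-2}|\dF_k|=\sum_{k=1}^{d-2}O(n^k)=O(n^{d-2}),
\]
as desired. There is no real obstacle here; the only ingredient beyond elementary counting is the uniform bound on the fiber sizes from Lemma~\ref{clm: T-F-number}, which is already available. The implicit constant in $O(n^{d-2})$ depends on $d$ through both the sunflower constant $(d+1)!(d+1)^{d+1}$ coming from Lemma~\ref{clm: T-F-number} and the number of summands $d-2$.
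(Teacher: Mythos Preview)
Your proof is correct and is essentially identical to the paper's own argument: both fix an assignment $c_{\dF}$, invoke Lemma~\ref{clm: T-F-number} to bound each fiber $|c_{\dF}^{-1}(T)|$ by a constant depending only on $d$, and then sum over all $T\in\binom{[n]}{k}$ for $k\le d-2$. The only cosmetic difference is that you break the sum into the pieces $|\dF_k|$ first, whereas the paper writes the double sum directly.
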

\begin{proof}
   Let $c_{\dF}$ be an assignment of maximum $\dF$-certificates of the sets in $\dF$. By Lemma~\ref{clm: T-F-number}, $|c_{\dF}^{-1}(T)|=O(1)$ for all $T\in {[n]\choose \le d}$. Therefore,
    \[|\dF_{\le d-2}| \le \sum_{k=1}^{d-2} \sum_{T\in \binom{[n]}{k}} O(1)= \sum_{k=1}^{d-2} \binom{n}{k} \cdot O(1) = O(n^{d-2}).\]
\end{proof}

We need more information on sets in $\dF_d\cup \dF_{d-1}$. 
For convenience, for $T\subseteq [n]$ and distinct elements $x_1,\ldots,x_r\in [n]\setminus T$, we write $Tx_1\ldots x_r$ for $T\cup \{x_1,\ldots, x_r\}$.

\begin{lem} \label{clm: aT details}
Let $n,d$ be positive integers, let $\dF\subseteq \binom{[n]}{d+1}$ with VC$(\dF) \le d$, and let $c_{\dF}$ be an assignment of maximum $\dF$-certificates of the sets in $\dF$. Let $T \in c_{\dF}(\dF)$. 
      If $|T|=d$ then $|\{F\in \dF: T\subseteq F\}|=1$ (thus $|c_{\dF}^{-1}(T)|=1$).   
      If $|T|=d-1$ then  one of the following holds:
      
            \begin{itemize}
                \item [(i)] There exist distinct elements $x,y,z\in [n]\setminus T$ such that $c_{\dF}^{-1}(T)=\{F\in \dF: T\subseteq F\}=\{Txy,Tyz,Tzx\}$. 
               
                \item [(ii)]  There exist distinct elements $x,y,z\in [n]\setminus T$ such that $c_{\dF}^{-1}(T)=\{Txy,Txz\}$ and there exists $U\subseteq [n]\setminus (T\cup \{x,y,z\})$ such that  $\{F\in \dF: T\subseteq F\}=\{Txy,Txz\}\cup \{Txu: u\in U\}$    or $\{F\in \dF: T\subseteq F\}=\{Txy,Txz,Tyz\}\cup \{Txu: u\in U\}$.  
            
                \item [(iii)] There exist distinct elements $x,y\in [n]\setminus T$ such that $c_{\dF}^{-1}(T)=\{Txy\}$ and  there  exist subsets $U,V\subseteq [n]\setminus (T\cup \{x,y\})$ such that  $\{F\in \dF: T\subseteq F\}=\{Txy\}\cup \{Txu:u\in U\}\cup \{Tyv:v\in V\}$.
            \end{itemize}
 
\end{lem}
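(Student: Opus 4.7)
The plan is to encode $\dG_T := \{F \in \dF : T \subseteq F\}$ graph-theoretically. When $|T|=d-1$, I will regard $\dG_T$ as the edge set of a graph $H_T$ on vertex set $[n]\setminus T$, where $\{x,y\}$ is an edge iff $Txy \in \dF$. The subfamily $c_\dF^{-1}(T) \subseteq \dG_T$ picks out a distinguished set of \emph{certificate edges}, and cases (i)--(iii) will correspond to $|c_\dF^{-1}(T)| = 3,2,1$ respectively.

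The $|T|=d$ case is immediate: if distinct $F_1 = T \cup \{a\}$ and $F_2 = T \cup \{b\}$ both lie in $\dF$, then $F_1 \cap F_2 = T$, so $T$ is a certificate of neither, contradicting $T \in c_\dF(\dF)$.

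For $|T|=d-1$, I will extract two properties of every certificate edge $e=xy \in c_\dF^{-1}(T)$, applied to $F_0 = Txy$ whose maximum $\dF$-certificate is $T$:

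\textbf{(A) Covering.} Since $T$ itself is a certificate of $F_0$, no $F' \in \dF$ satisfies $F' \cap F_0 = T$; unwinding, every edge of $H_T$ meets $\{x,y\}$.

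\textbf{(B) Degree.} Since $T$ has the largest size among certificates of $F_0$, neither of the $d$-subsets $Tx$ and $Ty$ of $F_0$ is a certificate; consequently there exist edges $xz_1, yz_2 \in E(H_T)$ with $z_1, z_2 \in [n]\setminus (T \cup \{x,y\})$.

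Applying (A) from one certificate edge to another immediately yields that any two edges in $c_\dF^{-1}(T)$ share a vertex. Combining (A) for two edges with (B) for a third rules out the star $K_{1,3}$: if $xa, xb, xc \in c_\dF^{-1}(T)$, then (B) applied to $xa$ produces an edge $aw$ with $w \notin T \cup \{x, a\}$; (A) applied to $xb$ then forces $w \in \{x, b\}$, hence $w = b$, and (A) applied to $xc$ similarly forces $w = c$, a contradiction. A standard argument on pairwise-intersecting families of $2$-sets (such a family is either a star or lies inside a triangle) then yields $|c_\dF^{-1}(T)| \le 3$, since any $K_{1,k}$ with $k \ge 3$ is excluded and any triangle has only three edges.

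Finally, the case structure falls out by intersecting the covering conditions (A), one per certificate edge. If $c_\dF^{-1}(T) = \{xy, yz, zx\}$ is a triangle, the three conditions jointly force every edge of $H_T$ to have both endpoints in $\{x,y,z\}$, giving case (i). If $c_\dF^{-1}(T) = \{xy, xz\}$ is a $2$-edge star at $x$, the two conditions allow arbitrary edges through $x$ together with at most the single edge $yz$, matching case (ii). If $c_\dF^{-1}(T) = \{xy\}$, only edges through $x$ or $y$ are allowed, yielding the two-star structure of case (iii). The main obstacle is the $K_{1,3}$ exclusion, as this is exactly what forces the size-$3$ case to be a triangle rather than a $3$-star; the remaining bookkeeping is a clean graph-theoretic case analysis.
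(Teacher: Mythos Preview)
Your proof is correct and follows essentially the same route as the paper: both arguments hinge on the ``covering'' observation (your property (A)) that every edge of $H_T$ must meet each certificate edge, and then perform a short case analysis on $|c_\dF^{-1}(T)|$. Your graph-theoretic framing is cosmetic, but your explicit use of maximality (property (B)) to exclude a $K_{1,3}$ among the certificate edges is a genuine addition---the paper's proof jumps from ``$Tyz\notin c_\dF^{-1}(T)$'' directly to case~(ii) without verifying that no further $Txw$ lies in $c_\dF^{-1}(T)$, and it is precisely your (B)-argument (the non-certificate status of $Ty$ forcing an edge at $y$ away from $x$) that closes this gap.
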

\begin{proof}
    First, suppose $|T|=d$. Let $x\in [n]\setminus T$ such that $c_{\dF}(Tx)=T$. Then, for any $F\in \dF$, $F\cap Tx\ne T$; so $\{F\in \dF: T\subseteq F\}=\{Tx\}$. Therefore, $|\{F\in \dF: T\subseteq F\}|=1$.

    Now consider the case $|T|=d-1$. Let $x,y\in [n]\setminus T$ such that $c_{\dF}(Txy)=T$. Then for any two distinct elements $u,v\in [n]\setminus T$ such that $Tuv\in \dF$, we have $Txy\cap Tuv\neq T$; so $\{u,v\}\cap \{x,y\}\neq \emptyset$. 
    
    If $c_{\dF}^{-1}(T)=\{Txy\}$ then,  for any $F\in \dF$ such that $T\subseteq F$ and $F\ne Txy$, we have $F=Txu$ for some $u\in [n]\setminus (T\cup \{x,y\})$ or $F=Tyv$  for some $v\in [n]\setminus (T\cup \{x,y\})$; so (iii) holds. Hence, we may assume that  there exists $z\in [n]\setminus (T\cup \{x,y\})$ such that $Txz\in c_{\dF}^{-1}(T)$ or $Tyz\in c_{\dF}^{-1}(T)$. Without loss of generality, we may assume $Txz\in c_{\dF}^{-1}(T)$. 
    
    Suppose we also have $Tyz\in c_{\dF}^{-1}(T)$. Then, for any distinct elements $u,v\in [n]\setminus T$ such that $Tuv\in \dF$, we must have $\{u,v\}\cap \{x,y\}\ne \emptyset$, $\{u,v\}\cap \{y,z\}\ne \emptyset$, and $\{u,v\}\cap \{z,x\}\ne \emptyset$. This implies $\{u,v\}\subseteq \{x,y,z\}$. Hence, (i) holds. 

    Now assume $Tyz\notin c_{\dF}^{-1}(T)$. Then for any distinct $u,v\in [n]\setminus T$ such that $Tuv\in \dF$, $\{u,v\}\cap \{x,y\}\ne \emptyset$ and $\{u,v\}\cap \{x,z\}\ne \emptyset$. Thus, $\{u,v\}= \{y,z\}$ or $x\in \{u,v\}$. So if $F\in \dF$ and $T\subseteq F$ then $F=Tyz$ or $F=Txu$ for some $u\in [n]\setminus (T\cup \{x,y,z\})$. Hence, we have (ii). 
    \end{proof}

To define $\dF^1$ for the desired partition of $\dF$, we will include all sets in $\dF_{\le d-2}$, as well as the sets in a family $\dP\subseteq \dF_{d-1}$ as described in the lemma below.

\begin{lem}\label{clm: P size}
Let $n,d$ be positive integers, let $\dF\subseteq \binom{[n]}{d+1}$ with VC$(\dF) \le d$, and let $c_{\dF}$ be an assignment of maximum $\dF$-certificates of the sets in $\dF$. Let $\dP:=\{F_i,F_i': i\in [k]\}$ denote a  collection of pairwise distinct sets, such that, for $i\in [k]$, $F_i,F_i'\in \dF_{d-1}$ and $c_{\dF}(F_i)\cup c_{\dF}(F_i')=F_i\cap F_i'$. 
Then
    $|\dP|\le  0.1|\overline{\partial \dF}| +O(n^{d-2})$, where $\overline{\partial\dF}={[n]\choose d}\setminus \partial \dF$.
\end{lem}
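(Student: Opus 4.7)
The plan begins by analyzing the pair condition. Equal certificates $c_{\dF}(F_i)=c_{\dF}(F_i')=T'$ with $|T'|=d-1$ would place both $F_i,F_i'$ in $c_{\dF}^{-1}(T')$; however, by Lemma~\ref{clm: aT details} (cases (i) and (ii), the only ones allowing $|c_{\dF}^{-1}(T')|\ge 2$), any two members of $c_{\dF}^{-1}(T')$ share an element outside $T'$, so $|F_i\cap F_i'|>|T'|$, contradicting $F_i\cap F_i'=T'$. Thus $c_{\dF}(F_i)\ne c_{\dF}(F_i')$, both have size $d-1$, and $|F_i\cap F_i'|=d$. Writing $T:=F_i\cap F_i'$, $c_{\dF}(F_i)=T\setminus\{y\}$, $c_{\dF}(F_i')=T\setminus\{y'\}$, $F_i=T\cup\{a\}$, $F_i'=T\cup\{b\}$ with $y\ne y'\in T$ and $a\ne b\in [n]\setminus T$, gives a canonical parametrization of each pair.

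The core tool is a witness count in $\overline{\partial\dF}$. For each $F\in\dP$ with certificate $T':=c_{\dF}(F)$, Lemma~\ref{clm: aT details} implies that $T'\cup\{w\}\in\overline{\partial\dF}$ exactly when $w\in[n]\setminus T'$ avoids the ``support set'' of $T'$---those elements appearing as an extra in some $\dF$-set containing $T'$---whose size $s(T')$ equals $3$, $|U|+3$, or $|U|+|V|+2$ in cases (i), (ii), (iii) respectively. Setting $W(T'):=|\{w:T'\cup\{w\}\in\overline{\partial\dF}\}|=n-d+1-s(T')$ and double counting (each $d$-set in $\overline{\partial\dF}$ has $d$ subsets of size $d-1$) yields $\sum_{T'\in c_{\dF}(\dP)}W(T')\le d\,|\overline{\partial\dF}|$; combined with $|c_{\dF}^{-1}(T')\cap\dP|\le 3$ from Lemma~\ref{clm: aT details}, this gives $|\dP|\le 3\,|c_{\dF}(\dP)|$.

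I would then split $\dP=\dP_{\mathrm{sm}}\cup\dP_{\mathrm{lg}}$ by thresholding $s(T')$ at $n-31d$. For $F\in\dP_{\mathrm{sm}}$ one has $W(c_{\dF}(F))\ge 30d$, so $30d\cdot|c_{\dF}(\dP_{\mathrm{sm}})|\le \sum_{T'} W(T')\le d\,|\overline{\partial\dF}|$, giving $|\dP_{\mathrm{sm}}|\le 3\,|c_{\dF}(\dP_{\mathrm{sm}})|\le \tfrac{1}{10}|\overline{\partial\dF}|$. For $F\in\dP_{\mathrm{lg}}$ the certificate $T'$ has $s(T')>n-31d$, so by Lemma~\ref{clm: aT details} cases (ii)/(iii) the link of $T'$ in $\dF$ is essentially a full star or nearly complete union of two stars, and some $d$-superset $S\supseteq T'$ satisfies $\deg_\dF(S)=\Omega(n)$.

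The hard part is bounding $|\dP_{\mathrm{lg}}|$ by $O(n^{d-2})$: a bare ``high-degree $d$-sets are rare'' estimate gives only $|\{S:\deg_\dF(S)=\Omega(n)\}|\le (d+1)|\dF|/\Omega(n)=O(n^{d-1})$. The extra leverage should come from combining the pair condition with Lemma~\ref{clm: aT details} applied to the partner. Specifically, the partner $F'$ satisfies $c_{\dF}(F')=T\setminus\{y'\}$ for some $y'\in c_{\dF}(F)$, so $F'\in c_{\dF}^{-1}(T\setminus\{y'\})$---a fiber of size $\le 3$ by Lemma~\ref{clm: aT details}---and iterating the case analysis on $F'$ one should show that either $F'\in\dP_{\mathrm{lg}}$ (forcing two distinct $d$-sets of degree $\Omega(n)$ sharing $d-1$ elements, a configuration that further reduces to the $O(n^{d-2})$ fiber structure via Lemma~\ref{clm: T-F-number}) or $F'$ is pinned by Lemma~\ref{clm: F2 size}-type counting; either way the pair $(F,F')$ is parameterized by data of size $O(n^{d-2})$. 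Working out this structural rigidity is the crux of the argument.
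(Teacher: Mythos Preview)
Your treatment of $\dP_{\mathrm{sm}}$ is correct and clean: the witness count together with $|c_\dF^{-1}(T')|\le 3$ from Lemma~\ref{clm: aT details} does give $|\dP_{\mathrm{sm}}|\le 0.1\,|\overline{\partial\dF}|$. The gap is entirely in $\dP_{\mathrm{lg}}$, and the sketch you offer there does not close it. Knowing that $c_\dF(F)$ (and perhaps $c_\dF(F')$) sits under a $d$-set of degree $\Omega(n)$ only places it among $O(n^{d-1})$ candidates, which is exactly your ``bare estimate''. Your appeal to Lemma~\ref{clm: T-F-number} is empty here: that lemma bounds certificate preimages by $O(1)$, but says nothing about how many $d$-sets have degree $\Omega(n)$, nor about pairs of such sets sharing $d-1$ elements. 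And the partner condition does not obviously compress the data to $O(n^{d-2})$: once $c_\dF(F)$ and $F\cap F'$ are fixed, $c_\dF(F')$ is one of $d-1$ sets and $F'$ one of at most three preimages, but $c_\dF(F)$ itself still ranges over $\Omega(n^{d-1})$ possibilities. The phrase ``working out this structural rigidity is the crux'' is an accurate admission that the proof is unfinished at precisely the point that carries the weight.

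The paper bypasses this difficulty with a different mechanism. It groups pairs by the $(d-2)$-set $T_i:=c_\dF(F_i)\cap c_\dF(F_i')$ and proves a structural claim: whenever two indices $i,j$ share $T_i=T_j=T$ and the four elements $x_i,y_i,x_j,y_j$ are distinct, at least two of the four cross $d$-sets $Tx_ix_j,\,Ty_iy_j,\,Tx_iy_j,\,Ty_ix_j$ lie in $\overline{\partial\dF}$. Double-counting quadruples $(i,j,T,D)$ with $D\in\overline{\partial\dF}$ then gives, via Cauchy--Schwarz on $\sum_T p(T)^2$ with $p(T)=|\{i:T_i=T\}|$, an inequality of the shape $|\dP|^2/\binom{n}{d-2}\lesssim d^2\,|\overline{\partial\dF}|+|\dP|$, from which the lemma follows directly with no large/small split. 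The idea you are missing is producing elements of $\overline{\partial\dF}$ from \emph{pairs of pairs} over a common $(d-2)$-core, rather than from individual certificates.
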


\begin{proof}
  By definition, we have $|c_{\dF}(F_i)|=|c_{\dF}(F_i')|=d-1$ (as $F_i,F_i'\in \dF_{d-1}$) and $|F_i|=|F_i'|=d+1$. Since $c_{\dF}(F_i) \cup c_{\dF}(F_i') = F_i\cap F_i'$ and $c_{\dF}(F_i) \neq F_i\cap F_i'$, we must have $|F_i\cap F_i'|=d$ and $|c_{\dF}(F_i)\cap c_{\dF}(F_i')|=d-2$.  Hence, for each $i\in [k]$, we may let $T_i:=c_{\dF}(F_i)\cap c_{\dF}(F_i')$ and $x_i,y_i\in [n]\setminus T_i$, such that $c_{\dF}(F_i)= Tx_i, c_{\dF}(F_i')= Ty_i$, and  $ F_i \cap   F_i' = Tx_iy_i $. Let $p(T):=|\{i\in [k]: T_i=T\}|$. Then 
    $$  \sum_{T\in \binom{[n]}{d-2}} p(T)=|\dP|/2.$$
    
   We claim that  if $i,j\in [k]$ such that $T_i=T_j=T$ and $x_i,y_i,x_j,y_j$ are all distinct then either $Tx_ix_j,Ty_iy_j\in \overline{\partial \dF}$ or  $Tx_iy_j,Ty_ix_j\in \overline{\partial \dF}$. To see this, let $F_i=Tx_iy_ia_i$, $F_i'=Tx_iy_ib_i$, $F_j=Tx_jy_ja_j$, and $F_j'=T x_jy_jb_j$, where $a_i\neq b_i$ and $a_j\neq b_j$. Note that $a_i=a_j$ or $b_i=b_j$ implies $a_i\ne b_j$ and $b_i\ne a_j$; so we have $a_i\neq a_j$ and $b_i\neq b_j$, or  $a_i\neq b_j$ and  $b_i\neq a_j$. By symmetry, we may assume $a_i\neq a_j$ and $b_i\neq b_j$. If $Tx_ix_j\notin \overline{\partial\dF}$ then by definition there exists $z\in [n]\setminus Tx_ix_j$ such that $Tx_ix_jz\in \dF$; now either $Tx_ix_jz\cap Tx_iy_ia_i=Tx_i=c_{\dF}(F_i)$ or $Tx_ix_jz\cap Tx_jy_ja_j=Tx_j=c_{\dF}(F_j)$,  contradicting the definition of $\dF$-certificates. Hence, $Tx_ix_j\in \overline{\partial \dF}$. Similarly, we have $Ty_iy_j\in \overline{\partial \dF}$ (using $b_i,b_j, y_i,y_j$ in place of $a_i,a_j,x_i,x_j$, respectively), completing the proof of this claim.

    We now count $M$,  the number of quadruples $(i,j,T,D)$ satisfying the following properties:
    \begin{itemize}
    \item  $i,j\in [k]$ and $i\ne j$, 
    \item $T_i=T_j=T$, and $x_i,y_i,x_j,y_j$ are all distinct, and 
    \item  $D\in \overline{\partial \dF}\cap \{Tx_ix_j,Ty_iy_j,Tx_iy_j,Ty_ix_j\}$.
    \end{itemize}
    
     First, fix $T\in \binom{[n]}{d-2}$ and let $m_1,m_2,\cdots ,m_{p(T)}\in [k]$ such that  $T_{m_r}=T$ for all $r\in [p(T)]$. Thus, for each $m_r$, $c_{\dF}(F_{m_r})=Tx_{m_r}$ and $c_{\dF}(F_{m_r}')=Ty_{m_r}$.  
    Since $|c_{\dF}^{-1}(Tx_{m_r})|\le 3$ and $|c_{\dF}^{-1}(Ty_{m_r})|\le 3$ (by Lemma \ref{clm: aT details}), there are at least $p(T)-6$ indices $m_s$ such that $x_{m_s},y_{m_s}, x_{m_r},y_{m_r}$ are distinct. By the above claim, each  such pair \( \{m_r, m_s\}\)  gives rise to at least two desired quadruples.
    So in total, there are at least $(p(T)-6)p(T)/2$ such pairs $\{m_r,m_s\}$ with  $r,s\in [p(T)]$,  contributing at least $(p(T)-6)p(T)$ desired quadruples $(i,j,T,D)$. Therefore
    $$M\ge \sum_{T\in \binom{[n]}{d-2}}(p(T)-6)p(T) 
    \ge \sum_{T\in \binom{[n]}{d-2}} p(T) ^2-3|\dP| \ge \frac{ |\dP|^2}{ 4\binom{n}{d-2}} -3|\dP|,$$ 
    where the final inequality is an application of the Cauchy-Schwarz inequality.  
      
    On the other hand,  fix a set $D\in \overline{\partial \dF}$ and let $T\in \binom{D}{d-2}$, such that $(i,j,T,D)$ is a quadruple counted by $M$. Let $D\setminus T=\{u,v\}$. Since $D\in \overline{\partial \dF}\cap \{Tx_ix_j,Ty_iy_j,Tx_iy_j,Ty_ix_j\}$, we may assume that 
    $u\in \{x_i,y_i\}$ and $v\in \{x_j,y_j\}$.
    Then $Tu=c_{\dF}(F_i)$ or $Tu=c_{\dF}(F_i')$. Since $|c_{\dF}^{-1}(Tu)|\le 3$ (by Lemma~\ref{clm: aT details}),  there are at most 3 possible choices for \( i \). Similarly, by considering $Tv$ (and $F_j,F_j'$),  there are at most 3 possible choices for \( j \).   Note that there are  $| \overline{\partial \dF}|$ choices for $D$ and, for each choice of $D$, there are at most $\binom{d}{d-2}$ choices for  $T$. Hence, the total number of quadruples is 
    $$M\le 3\times 3\times | \overline{\partial \dF}|\times \binom{d}{d-2}<9d^2 | \overline{\partial \dF}|.$$ 
    
    Combining the above bounds on \( M \), we obtain
    \[9d^2 | \overline{\partial \dF}|>    \frac{|\dP| ^2}{4\binom{n}{d-2}} -3|\dP| .\]
    When $|\dP|>400d^2n^{d-2} $, the above inequality implies
    $9d^2 | \overline{\partial \dF}|\ge    97 d^2 |\dP| $; so  $  0.1| \overline{\partial \dF}|\ge      |\dP| $. 
    If, instead, $|\dP|\le 400d^2n^{d-2} $ then $| \dP |=O(n^{d-2})$. 
    Therefore,  $|\dP| \le 0.1|\overline{\partial \dF}| +O(n^{d-2}).$
\end{proof}

For the desired partition of $\dF$, we will set  $\dF^1=\dF_{\le d-2}\cup \dP$ (as described in Lemma~\ref{clm: P size}). 
The next lemma (and its proof) describes useful properties of the family $\dG:=\dF\setminus \dF^1$ which will enable us to partition $\dG$ to $\dF^2$ and $\dF^3$. Recall the notation $\overline{\partial \dG}={[n]\choose d}\setminus \partial \dG$, where $\partial(\dG)$ is the shadow of $\dG$.

\begin{lem}\label{lem: good subfamily}
    Let $n,d$ be positive integers, and let $\dF\subseteq \binom{[n]}{d+1}$ be a family with VC$(\dF) \le d$. There exist a subfamily $\dG\subseteq \dF$ and an assignment $c_{\dG}: \dG\to {[n]\choose \le d}$ of maximum $\dG$-certificates of the sets in $\dG$,    such that  
    \begin{itemize}
        \item [(i)] $|\dF\setminus \dG|\le 0.1|\overline{\partial \dF}|+O(n^{d-2})$ and $|\dG| \ge |\dF|  - 0.1 |\overline{\partial \dG}|- O(n^{d-2})$, 
        \item[(ii)]  $d-1\le |c_{\dG}(F)|\le d$ for every $F\in \dG$, and
        \item[(iii)] if $F,F'\in \dG$ such that $|c_{\dG}(F)|=|c_{\dG}(F')|=d-1$ and $c_{\dG}(F)\ne c_{\dG}(F')$, then the families $S_{F,c_\dG}:=\{S\in {F\choose  d-1}\cup {F\choose  d}: c_{\dG}(F)\subseteq S\}$ and $S_{F',c_{\dG}}:=\{S\in {F'\choose  d-1}\cup {F'\choose  d}: c_{\dG}(F')\subseteq S\}$ are disjoint. 
    \end{itemize}
\end{lem}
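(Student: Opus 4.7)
The plan is to take $\dG:=\dF\setminus(\dF_{\le d-2}\cup \dP)$, where $\dP$ will be the vertex set of a maximal matching in an auxiliary ``bad-pair'' graph. Concretely, fix any assignment $c_\dF$ of maximum $\dF$-certificates and let $H$ be the graph on vertex set $\dF_{d-1}$ whose edges are the unordered pairs $\{F,F'\}$ with $c_\dF(F)\neq c_\dF(F')$ and $c_\dF(F)\cup c_\dF(F')=F\cap F'$. Take a maximal matching $M$ in $H$ and let $\dP$ be the set of vertices covered by $M$; since $\dP$ is exactly of the form described in Lemma~\ref{clm: P size}, we obtain $|\dP|\le 0.1|\overline{\partial \dF}|+O(n^{d-2})$. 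For the assignment on $\dG$, I would define $c_\dG$ so as to stay faithful to $c_\dF$ whenever possible: for each $F\in\dG$, if the maximum size of a $\dG$-certificate of $F$ equals $|c_\dF(F)|$, set $c_\dG(F):=c_\dF(F)$ (which is a $\dG$-certificate since $\dG\subseteq\dF$, and hence a maximum one in this case); otherwise, pick an arbitrary maximum $\dG$-certificate.

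Property (i) follows at once by adding the bounds $|\dF_{\le d-2}|=O(n^{d-2})$ from Lemma~\ref{clm: F2 size} and the bound on $|\dP|$ above; the second inequality in (i) is weaker, since $\dG\subseteq \dF$ implies $\partial\dG\subseteq\partial\dF$ and hence $|\overline{\partial\dG}|\ge|\overline{\partial\dF}|$. Property (ii) is also immediate: for every $F\in\dG$, $\dG\subseteq\dF$ implies $|c_\dG(F)|\ge|c_\dF(F)|\ge d-1$ (as $F\notin\dF_{\le d-2}$), while certificates are proper subsets of $F$ so $|c_\dG(F)|\le d$.

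The crucial step, and the main obstacle, is (iii). Unpacking the definitions of $S_{F,c_\dG}$ and $S_{F',c_\dG}$, a violation of (iii) forces $|c_\dG(F)|=|c_\dG(F')|=d-1$, the two certificates distinct, and $c_\dG(F)\cup c_\dG(F')=F\cap F'$ (necessarily of size $d$, since $|F\cap F'|\le d$ and the union of two distinct $(d-1)$-sets has size $\ge d$). The subtlety is that a priori $c_\dG$ can differ from $c_\dF$ on $\dG_{d-1}$ (removing sets can only enlarge maximum certificates, so a set $F$ with $|c_\dF(F)|=d-1$ might acquire a $\dG$-certificate of size $d$ that is unrelated to $c_\dF(F)$), so maximality of $M$ with respect to $c_\dF$ does not obviously preclude a violation. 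This is precisely what the targeted choice of $c_\dG$ remedies: whenever $|c_\dG(F)|=d-1$, the chain $d-1\le |c_\dF(F)|\le |c_\dG(F)|=d-1$ (using $F\notin\dF_{\le d-2}$) forces $|c_\dF(F)|=d-1$, and our prescription then yields $c_\dG(F)=c_\dF(F)$. Applying the same to $F'$, any violating pair $\{F,F'\}$ becomes an edge of $H$ with both endpoints in $\dG$, i.e., uncovered by $M$, contradicting the maximality of $M$.
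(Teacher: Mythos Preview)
Your proof is correct and follows essentially the same approach as the paper: take a maximal collection of bad pairs in $\dF_{d-1}$ (which you phrase as a maximal matching in the auxiliary graph $H$), set $\dG=(\dF_d\cup\dF_{d-1})\setminus\dP$, and define $c_\dG$ to coincide with $c_\dF$ whenever the maximum $\dG$-certificate still has size $d-1$. Your handling of the key point in (iii)---that $|c_\dG(F)|=d-1$ forces $c_\dG(F)=c_\dF(F)$, so a violating pair would extend the matching---is exactly the paper's argument.
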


\begin{proof}
   Let $c_{\dF}$ be an assignment of maximum $\dF$-certificates of the sets in $\dF$. Let $\dP:=\{F_i,F_i': i\in [k]\}$ denote a  collection of pairwise distinct sets, such that, for $i\in [k]$, $F_i,F_i'\in \dF_{d-1}$ and $c_{\dF}(F_i)\cup c_{\dF}(F_i')=F_i\cap F_i'$. Choose $\dP$ to be a maximal such collection,   and let $\dG:=(\dF_d\cup \dF_{d-1})\setminus \dP$. By Lemma
   ~\ref{clm: P size}, $|\dP|\le 0.1|\overline{\partial \dF}|+O(n^{d-2})$; and by Lemma~\ref{clm: F2 size}, $|\dF_{\le d-2}|=O(n^{d-2})$. Hence, $$|\dF\setminus \dG|=|\dF_{\le d-2}\cup \dP|\le 0.1|\overline{\partial \dF}|+O(n^{d-2}).$$ Moreover,   
    \[|\dG|=|\dF|-| \dF_{\le d-2}| - |\dP|\ge  |\dF|- O(n^{d-2}) -  (0.1|\overline{\partial \dF}| + O(n^{d-2}))
    \ge |\dF|-  0.1|\overline{\partial \dG}|- O(n^{d-2}), \]
    where the last inequality holds since  $\overline{\partial \dF}\subseteq \overline{\partial \dG}$ (as $\dG\subseteq \dF$). So (i) holds.

    Next, we define $c_{\dG}: \dG \to {[n]\choose \le d}$ such that, for each $F\in \dG$, $c_{\dG}(F)$ is a maximum $\dG$-certificate of $F$. Note that for each $F\in \dG$, $c_{\dF}(F)$ is also a $\dG$-certificate of $F$, but need not be  a maximum one. Let $F\in \dG$. If the size of a maximum $\dG$-certificate of $F$ is  $d-1$ then $|c_{\dF}(F)|=d-1$ and $c_{\dF}(F)$ is also a maximum $\dG$-certificate of $F$; so in this case, define $c_{\dG}(F)=c_{\dF}(F)$.   If the size of a maximum $\dG$-certificate of $F$ is $d$ then define $c_{\dG}(F)$ to be any maximum $\dG$-certificate of $F$. Clearly, (ii) holds.

    To prove (iii), let $T, T'\in c_{\dG}(\dG)\cap {[n]\choose d-1}$ be distinct, and let $F\in c_{\dG}^{-1}(T)$ and $F'\in c_{\dG}^{-1}(T')$. Suppose, for a contradiction,  there exists $S\in {[n]\choose  d-1}\cup {[n]\choose  d}$ such that $T\subseteq S\subseteq F$ and $T'\subseteq S\subseteq F'$. By considering the sizes of these sets involved, we have $T\cup T'=S= F\cap F'$. Since $|T|=|T'|=d-1$, we know from the above definition of $c_{\dG}$  that $c_{\dF}(F) =c_{\dG}(F) =T$ and $ c_{\dF}(F')=c_{\dG}(F')=T'$. Therefore $c_{\dF}(F)\cup c_{\dF}(F') =F\cap F'$ and, hence, we may add the pair $\{F,F'\}$ to $\dP$ to form a larger collection, contradicting the maximality of $\dP$.    
\end{proof}

   To further partition the family $\dG$ from Lemma~\ref{lem: good subfamily} to obtain the desired partition $\dF=\dF^1\cup \dF^2\cup \dF^3$, we will set $\dF^2:=\dG(i,j)\cup  \dG_{d-1}(i)\cup \dG_{d-1}(j)$ which are described in (ii) of the following lemma.
   
  \begin{lem}\label{lem: final refinement}
  Let $n,d$ be positive integers and let $\dF\subseteq \binom{[n]}{d+1}$ be a family with VC$(\dF) \le d$. Let $\dG\subseteq \dF$ and let $c_{\dG}$ be an assignment of $\dG$-certificates of the sets in $\dG$, such that $c_{\dG}(\dG)\subseteq {[n]\choose d-1}\cup {[n]\choose d}$.  Then there exist distinct elements $i,j\in [n]$, such that 
  \begin{itemize}
      \item [(i)] $|{[n]\setminus \{i,j\}\choose d}\cap \overline{\partial \dG}|= (1-o(1))|\overline{\partial \dG}|$, and 
      \item [(ii)] $\dG_{d-1}(i):=\{F\in \dG_{d-1}: i\in F \}$,  $\dG_{d-1}(j):=\{F\in \dG_{d-1}: j\in F\}$, and  $\dG(i,j):=\{F\in \dG: \{i,j\}\subseteq F \mbox{ and } \
      |c_{\dG}(F)\setminus \{i,j\}|\le d-2\}$ 
      all have size $O(n^{d-2})$.   
  \end{itemize}    
  \end{lem}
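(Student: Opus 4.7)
The plan is to find $i,j$ by a straightforward averaging/Markov argument that exploits the bound $|\dG_{d-1}| = O(n^{d-1})$ coming from Lemma~\ref{clm: aT details}; once such $i,j$ are chosen, the estimate $|\dG(i,j)| = O(n^{d-2})$ drops out from the same structural lemma, without any further work.

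First I would observe that each $F \in \dG_{d-1}$ has $c_{\dG}(F) \in \binom{[n]}{d-1}$, and cases (i)--(iii) of Lemma~\ref{clm: aT details} give $|c_{\dG}^{-1}(T)| \le 3$ for every such $T$; hence $|\dG_{d-1}| \le 3\binom{n}{d-1} = O(n^{d-1})$. For each $i \in [n]$, set
$$a_i := |\{S \in \overline{\partial \dG}: i \in S\}| \quad\text{and}\quad b_i := |\dG_{d-1}(i)|,$$
so that $\sum_i a_i = d\,|\overline{\partial \dG}|$ and $\sum_i b_i = (d+1)|\dG_{d-1}| = O(n^{d-1})$. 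Let
$$B_1 := \{i : a_i > |\overline{\partial \dG}|/\log n\} \quad\text{and}\quad B_2 := \{i : b_i > C n^{d-2}\}$$
for a large enough constant $C = C(d)$. Markov's inequality gives $|B_1| \le d\log n$ and $|B_2| \le n/10$, so $[n]\setminus (B_1 \cup B_2)$ contains at least two elements for $n$ large, and I would pick any distinct $i, j$ from it.

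Condition (i) then holds since
$$\left|\tbinom{[n]\setminus\{i,j\}}{d} \cap \overline{\partial\dG}\right| \ge |\overline{\partial\dG}| - a_i - a_j \ge (1 - 2/\log n)|\overline{\partial\dG}|,$$
and the bounds $|\dG_{d-1}(i)|,|\dG_{d-1}(j)| \le C n^{d-2}$ are immediate from $i,j \notin B_2$. For $|\dG(i,j)|$, I would split by $|c_{\dG}(F)|$: if $|c_{\dG}(F)| = d-1$, then $|c_{\dG}(F)\setminus \{i,j\}| \le d-2$ forces $c_{\dG}(F) \cap \{i,j\} \ne \emptyset$, so $F \in \dG_{d-1}(i) \cup \dG_{d-1}(j)$, giving at most $b_i + b_j = O(n^{d-2})$ such sets; if $|c_{\dG}(F)| = d$, then $|c_{\dG}(F)\setminus\{i,j\}| \le d-2$ forces $\{i,j\} \subseteq c_{\dG}(F)$, and since $|c_{\dG}^{-1}(T)| = 1$ for every $T \in \binom{[n]}{d}$ by Lemma~\ref{clm: aT details}, there are at most $\binom{n-2}{d-2} = O(n^{d-2})$ such sets.

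The main delicacy is balancing the two Markov thresholds. Condition (i) requires the threshold for $a_i$ to shrink to $0$ relative to $|\overline{\partial\dG}|$, so $|B_1|$ must grow with $n$ (the choice $|\overline{\partial\dG}|/\log n$ is convenient but any $\omega(1/n)$-scale suffices), whereas the threshold for $b_i$ must remain a constant multiple of $n^{d-2}$ since the average of $b_i$ is already $\Theta(n^{d-2})$. The bound $|\dG_{d-1}| = O(n^{d-1})$ from Lemma~\ref{clm: aT details} is exactly what makes this balance possible: without it the average of $b_i$ could be as large as $\Theta(n^{d-1})$, and no constant-multiple-of-$n^{d-2}$ threshold for $B_2$ would leave most of $[n]$ available.
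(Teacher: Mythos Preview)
Your argument is correct and follows essentially the same route as the paper's proof: both use double counting/Markov on $\sum_r |\dG_{d-1}(r)|$ and $\sum_r |\overline{\partial\dG}(r)|$ (bounding the former via $|c_{\dG}^{-1}(T)|\le 3$ from Lemma~\ref{clm: aT details}) to locate $i,j$, and then bound $|\dG(i,j)|$ by counting the possible certificates. Your bookkeeping differs only cosmetically---you make the Markov thresholds explicit ($|\overline{\partial\dG}|/\log n$ and $Cn^{d-2}$) where the paper just asserts ``at least $n/2+1$ good choices,'' and for $|\dG(i,j)|$ you split on $|c_{\dG}(F)|$ and recycle the bounds $b_i,b_j$ in the $d-1$ case, whereas the paper recounts certificates directly---but the substance is the same.
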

 \begin{proof} 
For $r\in [n]$, let $\dG_{d-1}(r)=\{F\in \dG_{d-1}: r\in F\}$, i.e., $\dG_{d-1}(r)=\{F\in \dG: r\in F \mbox{ and } |c_{\dG}(F)|=d-1\}$.  For each set $T\in \binom{[n]}{d-1}$, we have $|c_{\dG}^{-1}(T)|\le 3$ by Lemma~\ref{clm: aT details}. Hence, 
$ \sum_{r\in[n]} |\dG_{d-1}(r)|\le 3(d+1)\binom{n}{d-1}. $
By averaging, we see that there are at least $n/2+1$ choices of $r$ satisfying $|\dG_{d-1}(r)| = O(n^{d-2})$. 
Likewise, for $r\in [n]$, let $\overline{\partial \dG}(r):=\{F\in \overline{\partial \dG}: r\in F\}$; then $\sum_{r\in [n]}\overline{\partial \dG}(r)\le d|\overline{\partial \dG}|$ and, hence, at least $n/2+1$ choices of $r$ satisfy $|\overline{\partial \dG}(r)|=o(|\overline{\partial \dG}|)$. Thus, by the pigeonhole principle, there exist distinct $i,j\in [n]$, such that \begin{itemize}
    \item [(1)] $|\overline{\partial\dG}(i)|=o(|\overline{\partial \dG}|)$ and $|\overline{\partial\dG}(j)|  =o(|\overline{\partial \dG}|)$, and  
    \item [(2)] $|\dG_{d-1}(i)|=O(n^{d-2})$ and $|\dG_{d-1}(j)|= O(n^{d-2})$. 
    \end{itemize}
    
Now (1) implies 
$\left|\binom{[n]\setminus \{i,j\}}{d}\cap \overline{\partial \dG} \right|= (1-o(1))|\overline{\partial \dG} |$;  so (i) holds. 
In view of (2), it remains to show that $|\dG(i,j)|=O(n^{d-2})$.
 Note that if $T=c_{\dG}(F) \setminus \{i,j\}$ for some $F\in \dG(i,j)$ then, since  $d-1\le |c_{\dG}(F)|\le d$ (by assumption),
 we have $d-3\le |T|\le d-2$.   
 Thus, the number of $T$ satisfying $T=c_{\dG}(F) \setminus \{i,j\}$ for some $F\in \dG(i,j)$ is  at most $\binom{n-2}{d-2}+\binom{n-2}{d-3}=\binom{n-1}{d-2}$, and $c_{\dG}(F)\in \{T\cup \{i\}, T\cup \{j\},T\cup \{i,j\} \}$. 
 Therefore, $|c_{\dG}(\dG(i,j))|\le \binom{n-1}{d-2} \times 3$. By Lemma~\ref{clm: aT details}, at most 3 sets in $\dG(i,j)$ share the same set in $c_{\dG}(\dG(i,j))$ as their maximum $\dG$-certificate. 
 Hence,
$|\dG(i,j)|\le 3|c_{\dG}(\dG(i,j))|\le  3 \times \binom{n-1}{d-2}  \times 3=O(n^{d-2}) $. So (ii) also holds. 
\end{proof}




\section{Mapping $\dF^3$ to a vector space}

In this section we use Lemmas~\ref{lem: good subfamily} and \ref{lem: final refinement} to complete the partition of $\dF$ to $\dF^1\cup \dF^2\cup \dF^3$ and define a function $f$ from $\dF^3$ to $\mathbb{R}^m$ (where $m$ is the size of some set family) which will be modified to an injection later in Section 4. For a set family $\dS$, we let $\mathbf U_{\dS}:=\{\mathbf u_S:S\in \dS\}$ be the set of standard basis vectors of $\mathbb{R}^{|\dS|}$ indexed by a fixed linear ordering of the sets in $\dS$. Thus, for any two sets $S,T\in \dS$, the inner product  
\[
\mathbf u_S\cdot \mathbf u_T=
\begin{cases}
1 \quad \text{ if } T=S;\\ 
0 \quad \text{ if } T\ne S. 
\end{cases}
\]
We can now state and prove the main lemma of this section.

\begin{lem}\label{lem: f G'}
Let $n,d$ be positive integers and let $\dF\subseteq \binom{[n]}{d+1}$ be a family with VC$(\dF) \le d$. Then there is a partition of $\dF$ into families $\dF^1$, $\dF^2$ and $\dF^3$,  and there exist $V\in {[n]\choose n-2}$ and a function $f: \dF^3 \to \mathbf U_{\dS}$, where  $\dS=\binom{V}{d-1} \cup \left(\binom{V}{d}\setminus \overline{\partial \dF^3}\right)$ and $\overline{\partial \dF^3}={[n]\choose d}\setminus \partial \dF^3$,
such that  

\begin{enumerate}
      \item [(i)] $|\dF^1\cup \dF^2|\le 0.1|\overline{\partial \dF}|+O(n^{d-2})$ and $\left|{V\choose d}\cap \overline{\partial \dG}\right|=(1-o(1))|\overline{\partial \dG}|$,  where $\dG=\dF^2\cup \dF^3$, 
      
      \item [(ii)] for each set $F\in \dF^3$,  either $f(F)=\mathbf{u}_S$ for some $S\in \dS$, or $f(F)=\frac{1}{2}\mathbf{u}_S+\frac{1}{2}\mathbf{u}_{S'}$ for some distinct sets
    $S, S'\in \dS$, and

    \item[(iii)]   $\sum_{F \in \dF^3}  f(F)\cdot \mathbf{u}_S  \le 1$ for every  set $S\in  \dS$.
\end{enumerate}
\end{lem}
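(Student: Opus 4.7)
The plan is to combine Lemmas~\ref{lem: good subfamily} and~\ref{lem: final refinement} to produce the partition, and then build $f$ by a structural case analysis governed by $|c_{\dG}(F)|$, the intersection $F\cap \{i,j\}$, and the cluster types of Lemma~\ref{clm: aT details}.

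I would first apply Lemma~\ref{lem: good subfamily} to produce $\dG\subseteq \dF$ and a maximum-certificate assignment $c_{\dG}$ satisfying (i)--(iii) there, and set $\dF^1:=\dF\setminus \dG$. Since $c_{\dG}(\dG)\subseteq \binom{[n]}{d-1}\cup \binom{[n]}{d}$, Lemma~\ref{lem: final refinement} applies to $(\dG,c_{\dG})$ and yields distinct $i,j\in [n]$ with the two stated properties. Setting $V:=[n]\setminus \{i,j\}$, $\dF^2:=\dG(i,j)\cup \dG_{d-1}(i)\cup \dG_{d-1}(j)$, and $\dF^3:=\dG\setminus \dF^2$ gives a partition $\dF=\dF^1\cup \dF^2\cup \dF^3$ with $\dG=\dF^2\cup \dF^3$. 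Combining $|\dF^1|\le 0.1|\overline{\partial \dF}|+O(n^{d-2})$ from Lemma~\ref{lem: good subfamily}(i) with $|\dF^2|=O(n^{d-2})$ from Lemma~\ref{lem: final refinement}(ii), and using $\overline{\partial \dF}\subseteq \overline{\partial \dG}$, gives the first half of~(i); the second half of~(i) is exactly Lemma~\ref{lem: final refinement}(i).

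To define $f$, I use two preliminary observations for $F\in \dF^3$ with $T:=c_{\dG}(F)$: if $|T|=d-1$, then $F\notin \dG_{d-1}(i)\cup \dG_{d-1}(j)$ forces $F\subseteq V$ and $T\in \binom{V}{d-1}\subseteq \dS$; if $|T|=d$ and $\{i,j\}\subseteq F$, then $F\notin \dG(i,j)$ forces $|T\cap \{i,j\}|\le 1$, so the unique element $z\in F\setminus T$ lies in $\{i,j\}$. The assignment then goes by cases. For $|T|=d$ with $T\subseteq V$, set $f(F):=\mathbf{u}_T$. For $|T|=d$ with $T\not\subseteq V$, let $v$ be the unique element of $T\cap \{i,j\}$; set $f(F):=\mathbf{u}_{F\setminus \{v\}}$ when $F\cap \{i,j\}=\{v\}$ (a $d$-subset of $V$ lying in $\partial \dF^3$), and $f(F):=\mathbf{u}_{F\setminus \{i,j\}}$ when $\{i,j\}\subseteq F$ (a $(d-1)$-subset of $V$). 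For $|T|=d-1$, inspect the live cluster $c_{\dG}^{-1}(T)\cap \dF^3$; by Lemma~\ref{clm: aT details} it has at most three members, and I assign them distinct vectors from the pool $\{\mathbf{u}_T\}\cup \{\mathbf{u}_{Tw}:Tw\subseteq F'\text{ for some live }F'\}\subseteq \mathbf{U}_{\dS}$, using a half-sum $\tfrac{1}{2}\mathbf{u}_{Tw_1}+\tfrac{1}{2}\mathbf{u}_{Tw_2}$ to break symmetry in the three-element ``triangle'' case if needed. Condition~(ii) is then immediate.

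Finally, for~(iii) I would check that no $\mathbf{u}_S$ receives total weight greater than $1$. Within a single Case~B cluster the assignment is distinct by construction, and across different Case~B clusters with distinct $(d-1)$-certificates Lemma~\ref{lem: good subfamily}(iii) forces the candidate pools to be disjoint. Case~A internal collisions reduce to the uniqueness clause of Lemma~\ref{clm: aT details} combined with the certificate property: a shared $d$-subset equal to a certificate would force $|F\cap F'|\ge d+1$, hence $F=F'$. Cross-collisions between Case~A images on $(d-1)$- or $d$-subsets of $V$ and Case~B images are excluded because any such coincidence would yield $F\cap F''=c_{\dG}(F'')$ for some $F''\in \dG$, contradicting the certificate property. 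The main obstacle is a residual symmetric collision within the $|T|=d$ block: two sets $F=T_0\cup \{i,z\}$ and $F'=T_0\cup \{j,z\}$ with respective $(T_0\cup \{i\})$- and $(T_0\cup \{j\})$-certificates can coexist in $\dG$ without violating any certificate, and their naive images both land on $\mathbf{u}_{T_0\cup \{z\}}$. To handle this I plan to replace their images by the symmetric half-sum $\tfrac{1}{2}\mathbf{u}_{T_0}+\tfrac{1}{2}\mathbf{u}_{T_0\cup \{z\}}$ (both in $\dS$), and then reconcile with any Case~B cluster on $T_0$ by rerouting its members within their pool to vectors disjoint from $\{\mathbf{u}_{T_0},\mathbf{u}_{T_0\cup \{z\}}\}$; that this rerouting is always possible follows from the certificate constraint, which forces the interacting Case~B cluster to contain $z$ as a leaf and hence leaves enough distinct vectors in its pool. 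This interlocking of the half-sum choices with the Case~B pool choices, driven throughout by the certificate property of Lemma~\ref{clm: aT details}, is the technical heart of the construction.
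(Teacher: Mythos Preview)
Your setup is exactly the paper's: apply Lemma~\ref{lem: good subfamily} and Lemma~\ref{lem: final refinement}, take $V=[n]\setminus\{i,j\}$, set $\dF^1=\dF\setminus\dG$, $\dF^2=\dG(i,j)\cup\dG_{d-1}(i)\cup\dG_{d-1}(j)$, $\dF^3=\dG\setminus\dF^2$; part~(i) then follows as you say. The case split for $f$ also matches the paper's families $\dH_{0,*},\dH_{1,1},\dH_{1,2},\dK_d,\dK_{d-1}$. The substantive divergence is in your treatment of what the paper calls $\dH_{1,1}$ (your ``$|T|=d$, $T\not\subseteq V$, $|F\cap\{i,j\}|=1$'' case): the paper assigns every such $F$ the half-sum $\tfrac12\mathbf u_{F\cap V}+\tfrac12\mathbf u_{c(F)\cap V}$ from the outset, whereas you assign the full vector $\mathbf u_{F\cap V}$ and patch collisions afterwards.

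This is where your argument has a genuine gap. Your claim that Case~A/Case~B cross-collisions at a $d$-subset $S$ are ``excluded because any such coincidence would yield $F\cap F''=c_{\dG}(F'')$'' is false in the following situation. Take $T_0\in\binom{V}{d-1}$ with a type~(iii) cluster $c_{\dG}^{-1}(T_0)\cap\dK_{d-1}=\{T_0xy\}$ (so $c_{\dG}(T_0xy)=T_0$), and take $F=T_0y i\in\dH_{1,1}$ with $c_{\dG}(F)=T_0i$; Lemma~\ref{clm: aT details}(iii) allows this since $F$ has the form $T_0yv$. Your rule gives $f(F)=\mathbf u_{T_0y}$, and $\mathbf u_{T_0y}$ is in the Case~B pool for $T_0xy$. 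Yet $F\cap T_0xy=T_0y\neq T_0=c_{\dG}(T_0xy)$ and $F\cap T_0xy=T_0y\neq T_0i=c_{\dG}(F)$, so there is no certificate violation; with only one cluster member, there is no ``other $F''$'' to invoke either. Thus nothing in your scheme prevents assigning $T_0xy\mapsto\mathbf u_{T_0y}$ and producing total weight $2$ there. Your later ``rerouting'' paragraph only treats the patched symmetric pair $T_0iz,T_0jz$, not this single-$F$ interaction, and once both $T_0xi$ and $T_0yj$ sit in $\dH_{1,1}$ (which is compatible with all certificates), the pool $\{T_0,T_0x,T_0y\}$ for the lone cluster member must dodge both $T_0x$ and $T_0y$, so you are forced onto $\mathbf u_{T_0}$ --- exactly the kind of coordinated choice your sketch does not make.

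The paper avoids this by making the half-sum the \emph{default} for $\dH_{1,1}$: Claim~3.2 shows $\sum_{F\in\dH_{1,1}}f(F)\cdot\mathbf u_S\le 1$ uniformly for every $S$, and then Claims~3.4--3.5 choose the $\dK_{d-1}$ images (including the convex combination in the type~(iii) case) against this fixed $\dH_{1,1}$ background, case-splitting on $|\{H\in\dH_{1,1}:c(H)\cap V=T_0\}|\in\{0,1,\ge 2\}$. If you want to salvage your full-vector approach you would have to replicate that coordinated choice for every $T_0$, which is essentially Claim~3.5; as written, the cross-collision step does not go through.
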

We proceed with the proof of this lemma. Let $n,d$ be positive integers, and let $\dF\subseteq {n\choose d+1}$ with VC$(\dF)\le d$. 
By Lemma~\ref{lem: good subfamily}, there exist a family $\mathcal{G}\subseteq \dF$ and an assignment $c:=c_{\dG}$ of maximum $\dG$-certificates of the sets in $\dG$, satisfying conclusions (i)--(iii) of Lemma~\ref{lem: good subfamily}.  By Lemma~\ref{lem: final refinement}, there exist distinct $i,j\in [n]$ such that $i,j,\dG(i,j),\dG_{d-1}(i),\dG_{d-1}(j)$ satisfy conclusions (i)--(iii) of Lemma~\ref{lem: final refinement} and, without loss of generality, we may assume $i=1$ and $j=2$. 

Set $V:=[n]\setminus [2]$,  $\dF^1:=\dF\setminus \dG$, $\dF^2:=\dG(1,2)\cup\dG_{d-1}(1)\cup \dG_{d-1}(2)$, and $\dF^3:=\dF\setminus (\dF^1\cup \dF^2)$. Then 
$\dG=\dF^2\cup \dF^3$,  $|\dF^1\cup\dF^2|\le 0.1|\overline{\partial \dF}|+O(n^{d-2})$ (by (i) of Lemma~\ref{lem: good subfamily} and (ii) of Lemma~\ref{lem: final refinement}), and  $\left|{V\choose d}\cap \overline{\partial \dG}\right|=(1-o(1))|\overline{\partial \dG}|$ (by (i) of Lemma~\ref{lem: final refinement}).
Hence, we have (i). 

The next step is to define a function from $\dF^3$ to $\mathbf U_{\dS}$, the standard basis of the vector space $\mathbb{R}^{|\dS|}$ indexed by a fixed linear ordering of the sets in $\dS$, where  
$$\dS:=\binom{V}{d-1} \cup \left(\binom{V}{d}\setminus \overline{\partial \dF^3}\right),$$ 
and prove that (ii) and (iii) also hold.

In order to describe the function $f:\dF^3\to \mathbf U_{\dS}$, 
we partition $\dF^3$ into  disjoint families (some of which may be empty). First, we partition $\dF^3$ to disjoint families $\dH$ and $\dK$, with
$$\dH = \{F \in \dF^3 : F \not\subseteq V \} 
\text{ and }
\dK = \{F \in \dF^3 : F \subseteq V \}.$$
Note that, since $F \notin \dG(1,2)\cup\dG_{d-1}(1)\cup \dG_{d-1}(2)$, 
\[\mbox{for each $F\in \dH$, we have
$|c(F)|=d$ and $|c(F)\cap [2]|\le 1$.}\] 
Next, we partition $\dK$ to disjoint families $\dK_d$ and $\dK_{d-1}$, where
$$    \dK_{d} = \{F \in \dK : |c(F)|=d\} 
\text{ and }
     \dK_{d-1} = \{F \in \dK : |c(F)|=d-1\}.$$
We further partition $\dH$ to three disjoint families $\dH_{0,*}, \dH_{1,1}, \dH_{1,2}$, with subscripts corresponding to the pair $\left(|c(F)\cap [2]|,|F\cap [2]|\right)$, as follows:
\begin{itemize}
    \item $\dH_{0,*} =\{F\in \dH: |c(F)\cap [2]|=0\}$;
    \item $\dH_{1,1} =\{F\in \dH: |c(F)\cap [2]|=|F\cap [2]|=1\}    $;
    \item $\dH_{1,2} =\{F\in \dH: |c(F)\cap [2]|=1 \text{ and } |F\cap [2]|=2\}  $.
    \end{itemize}

\begin{dfn}\label{dfn: f}
Define $f:\dF^3\to \mathbf U_{\dS}$  as follows. For $F\in \dF^3\setminus \dK_{d-1}$, let
\[
f(F) =
\begin{cases}
\mathbf{u}_{c(F)}   & \text{if } F \in\dH_{0,*} \cup  \dK_{d};  \\
\mathbf{u}_{F\cap V}    & \text{if } F \in \dH_{1,2}; \\
\frac{1}{2}\mathbf{u}_{F\cap V} + \frac{1}{2}\mathbf{u}_{c(F)\cap V}    & \text{if } F \in \dH_{1,1}. 
\end{cases}
\]
To define $f$ on $\dK_{d-1}$, 
we consider $c^{-1}(T)\cap \dK_{d-1}$ for all $T\in c(\dF^3)$, which may be divided into three different categories according to (i)--(iii) of Lemma~\ref{clm: aT details}:
\begin{itemize}
    \item [(i)] If  $c^{-1}(T)\cap \dK_{d-1}= \{Txy, Tyz, Tzx\}$ for some distinct elements $x,y,z\in [n]\setminus T$, then let $f(Txy)=\mathbf{u}_{Tx}$, $f(Tyz)=\mathbf{u}_{Ty}$, and $f(Tzx)=\mathbf{u}_{Tz}$.

    \item [(ii)] If $c^{-1}(T)\cap \dK_{d-1}= \{Txy, Txz\}$ for some distinct elements $x,y,z\in [n]\setminus T$, then let  $f(Txy)=\mathbf{u}_{Ty}$ and  $f(Txz)=\mathbf{u}_{Tz}$.

    \item [(iii)] If $c^{-1}(T)\cap \dK_{d-1}= \{Txy\}$ for some distinct elements $x,y\in [n]\setminus T$, then  let $f(Txy)$ be a convex combination of $\mathbf{u}_{T}, \mathbf{u}_{Tx}, \mathbf{u}_{Ty}$, with coefficients in $\{0, \frac 12, 1\}$.
\end{itemize}
\end{dfn}

\begin{Claim}\label{clm: mapping size}
    The function $f$ is well defined and, for each set $F\in \dF^3$,  either $f(F)=\mathbf{u}_S$ for some $S\in \dS$, or $f(F)=\frac{1}{2}\mathbf{u}_S+\frac{1}{2}\mathbf{u}_{S'}$ for some distinct
    $S, S'\in \dS$. 
\end{Claim}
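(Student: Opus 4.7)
The plan is to prove the three assertions bundled into the claim---that $f$ is well-defined (its cases exhaust $\dF^3$), that every index set appearing in its image lies in $\dS$, and that every $f(F)$ has the prescribed form---by walking through the seven branches of Definition~\ref{dfn: f} in order.

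I would first verify that the branches genuinely partition $\dF^3$. By construction $\dF^3 = \dH\cup\dK$, and Lemma~\ref{lem: good subfamily}(ii) gives $|c(F)|\in\{d-1,d\}$ for every $F\in\dG\supseteq\dF^3$, yielding $\dK=\dK_d\cup\dK_{d-1}$. The key step---stated without proof just before Definition~\ref{dfn: f}---is that every $F\in\dH$ satisfies $|c(F)|=d$ and $|c(F)\cap[2]|\le 1$, so that $\dH = \dH_{0,*}\cup\dH_{1,1}\cup\dH_{1,2}$. I would justify this using $F\notin\dF^2=\dG(1,2)\cup\dG_{d-1}(1)\cup\dG_{d-1}(2)$: if $|c(F)|=d-1$ then $F\notin\dG_{d-1}(1)\cup\dG_{d-1}(2)$ forces $F\cap[2]=\emptyset$, contradicting $F\in\dH$; and if $|c(F)\cap[2]|=2$ then $[2]\subseteq c(F)\subseteq F$ and $|c(F)\setminus[2]|=d-2$ place $F$ in $\dG(1,2)$, again a contradiction. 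Inside $\dK_{d-1}$, Lemma~\ref{clm: aT details} forces $c^{-1}(T)$ for $|T|=d-1$ into one of three configurations, and intersecting with $\dK_{d-1}$ yields precisely subcases (i)--(iii) of the definition.

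Next I would check that every index set used in $f$ lies in $\dS=\binom{V}{d-1}\cup\bigl(\binom{V}{d}\setminus\overline{\partial\dF^3}\bigr)$. For $F\in\dH_{0,*}\cup\dK_d$, $c(F)$ is a $d$-subset of $V$ (using $|c(F)\cap[2]|=0$ or $F\subseteq V$) and a proper subset of $F\in\dF^3$, so $c(F)\in\partial\dF^3\cap\binom{V}{d}\subseteq\dS$. For $F\in\dH_{1,2}$, $|F\cap V|=d-1$, so $F\cap V\in\binom{V}{d-1}\subseteq\dS$. For $F\in\dH_{1,1}$, $c(F)\cap V$ is a $(d-1)$-subset of $V$ while $F\cap V$ is a $d$-subset of $V$ contained in $F\in\dF^3$, so both belong to $\dS$ and are distinct by cardinality. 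In the three subcases of $\dK_{d-1}$, $F\subseteq V$ ensures that $T\in\binom{V}{d-1}$ and each of $Tx,Ty,Tz$ is a $d$-subset of $V$ contained in the corresponding set of $\dK_{d-1}\subseteq\dF^3$, so all relevant sets lie in $\dS$.

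Finally, the form of $f(F)$ is immediate in each branch: in $\dH_{0,*}\cup\dK_d$, $\dH_{1,2}$, and subcases (i)--(ii) of $\dK_{d-1}$, $f(F)$ is a single basis vector $\mathbf{u}_S$; in $\dH_{1,1}$ it is $\tfrac12\mathbf{u}_{F\cap V}+\tfrac12\mathbf{u}_{c(F)\cap V}$ with two distinct indices; and in subcase (iii) of $\dK_{d-1}$ the coefficients lie in $\{0,\tfrac12,1\}$ and must sum to $1$, whose only solutions are the six tuples $(1,0,0),(0,1,0),(0,0,1),(\tfrac12,\tfrac12,0),(\tfrac12,0,\tfrac12),(0,\tfrac12,\tfrac12)$---producing either a single basis vector or a half-half combination of two distinct ones (distinctness of $T,Tx,Ty$ follows from $x\neq y$ and the size gap). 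I expect the main---though modest---obstacle to be the partition verification for $\dH$, since it implicitly invokes the construction of $\dF^2$ from Lemma~\ref{lem: final refinement}; the remaining steps reduce to straightforward bookkeeping on the sizes and overlaps of $F$, $c(F)$, $[2]$, and $V$.
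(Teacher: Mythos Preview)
Your proposal is correct and follows essentially the same approach as the paper's proof: a case-by-case verification that the branches of Definition~\ref{dfn: f} exhaust $\dF^3$, that each index set lies in $\dS$, and that each $f(F)$ has the stated form. You are somewhat more explicit than the paper in justifying the partition $\dH=\dH_{0,*}\cup\dH_{1,1}\cup\dH_{1,2}$ (which the paper records just before the definition) and in observing that intersecting $c^{-1}(T)$ with $\dK_{d-1}$ still yields one of the three configurations, but the substance is the same.
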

\begin{proof} 
  By Lemma~\ref{clm: aT details}, $\dK_{d-1}$ admits a partition into $c^{-1}(T)\cap \dK_{d-1}$ for $T\in c(\dF^3)$, of the form described in (i)--(iii) of Definition~\ref{dfn: f}. By definition of $f$, for each $F\in \dF^3$, $f(F)$ has the form $\mathbf{u}_S$ or $\frac{1}{2}\mathbf{u}_S+\frac{1}{2}\mathbf{u}_{S'}$, where $S,S'$ are subsets of $F$ and both contain $T$. (Note that the convex combination in (iii) has at most two terms.) Hence, to prove this claim, it suffices to check that these $S$ or $S'$ are in ${V\choose d-1}\cup \left({V\choose d}\setminus \overline{\partial \dF^3}\right)$. We have four cases based on Definition~\ref{dfn: f}. Note that for any $F\in \dF^3$, $c(F)\notin \overline{\partial \dF^3}$ (as either $|c(F)|=d-1$ or $c(F)\in \partial \dF^3$).

        If $F \in \dK_{d} \cup \dH_{0,*} $ then $|c(F)|=d$ and $c(F)\subseteq V$; so  $c(F)\in \binom{V}{d}\setminus \overline{\partial \dF^3} $. Hence, $f(F)=\mathbf u_{c(F)}$ is well defined.  
        
         If $F \in \dH_{1,2}$ then $| F\cap [2]|=2$; so $F\cap V\in \binom{V}{d-1}$. Hence $f(F)=\mathbf u_{F\cap V}$ is well defined.     
        
         Now suppose $F \in \dH_{1,1}$. Then $| c(F) \cap [2]|=| F\cap [2]|=1$; so  $ F\cap V\in \binom{V}{d}\setminus \overline{\partial \dF^3}$ and  $c(F)\cap V\in {V\choose d-1}$.  Hence, $f(F)=\frac{1}{2}\mathbf{u}_{F\cap V} + \frac{1}{2}\mathbf{u}_{c(F)\cap V}$ is also well defined.

        Finally, consider $F\in \dK_{d-1}$. Let $T=c(F)$ and $F=Txy$. Note that $Txy\subseteq V$ by definition of $\dK$. By (iii) of  Definition~\ref{dfn: f},   $f(F)$ is a convex combination of $\mathbf u_T, \mathbf u_{Tx}, \mathbf u_{Ty}$ (and possibly $\mathbf u_{Tz}$ with  $z\in V\setminus (T\cup \{x,y\})$) with coefficients in $\{0, \frac 12, 1\}$. Clearly, $T\in {V\choose d-1}$ and  $Tx,Ty,Tz\in {V\choose d}\setminus \overline{\partial \dF^3}$.  Hence, $f(F)$ is well defined. 
 \end{proof}

So (ii) holds. To prove that (iii) also holds, we prove a convenient claim, which, for a given $S\in \dS$, bounds the sum of inner products $f(F)\cdot \mathbf u_S$ over all sets in $\dH_{1,1}$.

\begin{Claim}\label{clm: H1,2}
    For each $S\in \dS $,  $ \sum_{F\in \dH_{1,1}}f(F) \cdot \mathbf{u}_S   \le 1.$ 
\end{Claim}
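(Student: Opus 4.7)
The plan is to analyze the contribution to $\sum_{F \in \dH_{1,1}} f(F) \cdot \mathbf{u}_S$ by cases based on whether $|S| = d-1$ or $|S| = d$, exploiting the fact that on $\dH_{1,1}$ the function $f$ splits its mass evenly between a set of size $d$ and one of size $d-1$. First I would recall the key observation that for each $F \in \dH_{1,1}$ we have $|F \cap [2]| = |c(F) \cap [2]| = 1$, which forces $|F \cap V| = d$ and $|c(F) \cap V| = d - 1$, and $f(F) = \tfrac{1}{2}\mathbf{u}_{F \cap V} + \tfrac{1}{2}\mathbf{u}_{c(F) \cap V}$. In particular, $f(F) \cdot \mathbf{u}_S$ is nonzero only when $S$ equals one of these two sets, and each contribution is exactly $\tfrac{1}{2}$.

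Next I would split into two cases. For $S \in \binom{V}{d-1}$, size constraints imply that only the summand $\tfrac{1}{2}\mathbf{u}_{c(F) \cap V}$ can match $S$, so we need $c(F) \cap V = S$, equivalently $c(F) \in \{S \cup \{1\}, S \cup \{2\}\}$. Each such $c(F)$ has size $d$, so by Lemma~\ref{clm: aT details} applied to $\dG$ with $c_{\dG}$, we get $|c^{-1}(S\cup\{1\})| \le 1$ and $|c^{-1}(S\cup\{2\})| \le 1$. Hence at most two sets $F \in \dH_{1,1}$ contribute to this sum, giving at most $2 \cdot \tfrac{1}{2} = 1$.

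For the second case, $S \in \binom{V}{d} \setminus \overline{\partial \dF^3}$, only the summand $\tfrac{1}{2}\mathbf{u}_{F \cap V}$ can match $S$, requiring $F \cap V = S$. Since $|F \cap [2]| = 1$ and $|F| = d+1$, this forces $F \in \{S \cup \{1\}, S \cup \{2\}\}$, so again at most two sets contribute, and the total is at most $1$.

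I do not expect any serious obstacle here: the argument is a direct size-counting case analysis, and the only technical subtlety is to confirm that Lemma~\ref{clm: aT details} is being applied to the correct pair $(\dG, c_{\dG})$ and that the bound transfers to $\dF^3 \subseteq \dG$ via $|c^{-1}(T) \cap \dF^3| \le |c_{\dG}^{-1}(T)|$. This preparation suffices to conclude $\sum_{F \in \dH_{1,1}} f(F) \cdot \mathbf{u}_S \le 1$ for every $S \in \dS$.
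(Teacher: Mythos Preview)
Your proposal is correct and follows essentially the same approach as the paper's proof: both split into the cases $|S|=d-1$ and $|S|=d$, observe that a nonzero contribution forces $c(F)\cap V=S$ (respectively $F\cap V=S$), bound the number of contributing $F$ by two via Lemma~\ref{clm: aT details} (respectively via $F\in\{S1,S2\}$), and conclude the sum is at most $1$. The only cosmetic differences are the order of the two cases and that the paper phrases the $|S|=d-1$ bound as ``at most one set in $\dG$ contains $Si$'' rather than ``$|c^{-1}(Si)|\le 1$''.
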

\begin{proof} 
For each $F\in \dH_{1,1}$, we have $f(F)=\frac 12 \mathbf u_{F\cap V}+\frac 12 \mathbf u_{c(F)\cap V}$ (by definition of $f$), and we have $F\cap V\in \binom{V}{d}\setminus \overline{\partial \dF^3}$ and  $c(F)\cap V\in {V\choose d-1}$ (by definition of $\dH_{1,1}$). 

Suppose  $|S|=d$. If $F\in \dH_{1,1}$ and $f(F) \cdot \mathbf{u}_S \neq 0$, then $F\cap V=S$ and $f(F) \cdot \mathbf{u}_S =1/2$. It follows that $F\in \{S1, S2\}$ and, hence,  $\sum_{F\in \dH_{1,1}}f(F) \cdot \mathbf{u}_S  \le 1/2 +1/2 =1.$ 

Suppose $|S|=d-1$. If $F\in \dH_{1,1}$ and $f(F) \cdot \mathbf{u}_S \neq 0$, then $c(F)\cap V=S$ and $f(F) \cdot \mathbf{u}_S =1/2$. It follows that $c(F)\in \{S1,S2\}$.  Since $|S1|=|S2|=d$, it follows from Lemma~\ref{clm: aT details} that the number of sets in $\dF$ containing $S1$ (respectively, $S2$) is at most one. Hence,  there are at most two sets $F$ in $\dH_{1,1}$ satisfying $f(F) \cdot \mathbf{u}_S \neq 0$.
This implies that $ \sum_{F\in \dH_{1,1}}f(F) \cdot \mathbf{u}_S \le 1/2 +1/2 =1.$ 
\end{proof}

Let $\dS_0:=\bigcup_{F\in \dK_{d-1}} \dS_{F,c}$, where $\dS_{F,c}=\{S\in {F\choose d-1}
\cup {F\choose d}:  c(F)\subseteq S\}$ are the families involved in (iii) of Lemma~\ref{lem: good subfamily}. 
By (iii) of Definition~\ref{dfn: f}, for each $F\in \dK_{d-1}$,    $f(F)$ is a convex combination of the vectors  $\{\mathbf{u}_{S}: S\in \dS_{F,c}\}$. 
The following claim deals with sets in $\dS\setminus \dS_0$.

\begin{Claim}\label{clm: coordinate S}
 For each $S\in \dS\setminus \dS_0$,  we have  $ \sum_{F\in \dF^3}f(F) \cdot \mathbf{u}_S   \le 1$.
\end{Claim}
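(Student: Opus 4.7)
The plan is to split the sum $\sum_{F\in \dF^3}f(F)\cdot \mathbf u_S$ into contributions from $\dK_{d-1}$ and from $\dF^3\setminus \dK_{d-1}=\dH\cup \dK_d$, then observe that the hypothesis $S\notin \dS_0$ kills the first piece, and bound the second by splitting on whether $|S|=d$ or $|S|=d-1$. Indeed, for every $F\in \dK_{d-1}$, Definition~\ref{dfn: f}(i)--(iii) writes $f(F)$ as a convex combination of vectors $\mathbf u_{S'}$ with $S'\in \dS_{F,c}$; since $S\notin \dS_0=\bigcup_{F\in \dK_{d-1}}\dS_{F,c}$, we have $f(F)\cdot \mathbf u_S=0$ for every $F\in \dK_{d-1}$, and it remains to bound $\sum_{F\in \dH\cup \dK_d}f(F)\cdot \mathbf u_S$.

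When $|S|=d$, sets $F\in \dH_{1,2}$ contribute nothing because $|F\cap V|=d-1$; sets $F\in \dH_{0,*}\cup \dK_d$ contribute $1$ exactly when $c(F)=S$; and the contribution from $\dH_{1,1}$ is at most $1$ by Claim~\ref{clm: H1,2}. The delicate point is to show that the latter two cannot both be non-zero: if some $F_0\in \dH_{0,*}\cup \dK_d$ satisfies $c(F_0)=S$, then $S\in c(\dG)$ with $|S|=d$, so by Lemma~\ref{clm: aT details} (applied to $\dG$) the set $F_0$ is the unique member of $\dG$ containing $S$; since $\dH_{1,1}$ is disjoint from $\dH_{0,*}\cup \dK_d$, no $F\in \dH_{1,1}$ can have $F\cap V=S$, so the $\dH_{1,1}$ piece vanishes. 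The total is therefore at most $1$ in this case.

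When $|S|=d-1$, sets $F\in \dH_{0,*}\cup \dK_d$ contribute $0$ (their certificates have size $d$), the only possible contributor in $\dH_{1,2}$ is $F=S\cup \{1,2\}$, and the $\dH_{1,1}$ piece is again at most $1$ by Claim~\ref{clm: H1,2}. To forbid a simultaneous contribution, suppose $S\cup\{1,2\}\in \dH_{1,2}$ and some $F\in \dH_{1,1}$ has $c(F)=S\cup\{1\}$. Then Lemma~\ref{clm: aT details} (applied to $\dG$, noting $|S\cup\{1\}|=d$) forces $F$ to be the unique set in $\dG$ containing $S\cup\{1\}$; but $S\cup\{1,2\}\in \dG$ also contains $S\cup\{1\}$, so $F=S\cup\{1,2\}$, contradicting $|F\cap [2]|=1$. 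The same argument rules out $c(F)=S\cup\{2\}$, so the $\dH_{1,1}$ piece vanishes whenever the $\dH_{1,2}$ contribution is non-zero, and the total is again at most $1$.

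The main obstacle is precisely this double-counting: taken on its own, Claim~\ref{clm: H1,2} only guarantees a bound of $1$ (not $0$) for the $\dH_{1,1}$ part, so the argument must invoke the uniqueness guarantee of Lemma~\ref{clm: aT details} for size-$d$ certificates in $\dG$ to eliminate the $\dH_{1,1}$ contribution in exactly those configurations where some other piece is already active.
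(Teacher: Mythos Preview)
Your argument is correct and follows essentially the same strategy as the paper: kill the $\dK_{d-1}$ contribution using $S\notin\dS_0$, then split on $|S|\in\{d,d-1\}$ and show that the potentially competing pieces ($\dH_{0,*}\cup\dK_d$ versus $\dH_{1,1}$ in the first case, $\dH_{1,2}$ versus $\dH_{1,1}$ in the second) cannot both be active. The only cosmetic difference is in how the $|S|=d-1$ contradiction is reached: the paper computes $F_1\cap F_2=Si=c(F_1)$ and appeals directly to the definition of a $\dG$-certificate, whereas you invoke the uniqueness clause of Lemma~\ref{clm: aT details} for size-$d$ certificates in $\dG$ to force $F=S\cup\{1,2\}$; these are equivalent, since that uniqueness clause is itself an immediate consequence of the certificate definition.
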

\begin{proof}
Note that $f(F) \cdot \mathbf{u}_S=0$ for all $F\in \dK_{d-1}$,  since if $F\in \dK_{d-1}$ then  $f(F)$ is a convex combination of vectors in  $\{\mathbf{u}_{S'}: S'\in \dS_{F,c}\}$ and $\dS_{F,c}\subseteq \dS_0$.
Therefore, it suffices to consider only the sets $F\in \dF^3\setminus \dK_{d-1}= \dH\cup \dK_{d}$. 
We distinguish  two cases. 
\medskip

Case 1. $S\in {V\choose d}$. 

First, suppose $S=c(F')$ for some $F'\in \dF^3$. Then, since $|S|=d$, it follows from Lemma~\ref{clm: aT details} that $\{F\in \dF^3: S\subseteq F\}=\{F'\}$. Hence, by definition, $  \sum_{F\in \dF^3}f(F) \cdot \mathbf{u}_S  =  f(F')\cdot \mathbf{u}_S    \le 1$.  

Now suppose $S\neq c(F)$ for all $F\in \dF^3$.
If $F\in \dH_{0,*}\cup \dK_d$ then by definition
$f(F)=\mathbf u_{c(F)}$ and, hence, $f(F)\ne \mathbf{u}_S$ (as $S\ne c(F)$); so $  f(F) \cdot \mathbf{u}_S  =0$.  If $F\in \dH_{1,2}$ then by definition we have $f(F)=\mathbf U_{F\cap V}$ and $|F\cap V|=d-1$; so $F\cap V\ne S$ (as $|S|=d$) and, hence, $  f(F) \cdot \mathbf{u}_S  =0$. 
Therefore, 
    $  \sum_{F\in \dF^3}f(F) \cdot \mathbf{u}_S  =
     \sum_{F\in \dH_{1,1}}f(F)\cdot \mathbf{u}_S$; so  $\sum_{F\in \dF^3}f(F) \cdot \mathbf u_S  \le 1$ by Claim~\ref{clm: H1,2}.    
\medskip

   Case 2.  $S\in \binom{V}{d-1}$. 
   
   Suppose $F\in  \dH\cup \dK_{d}$ with $f(F) \cdot \mathbf{u}_S\neq 0$. Then $F\notin \dH_{0,*}\cup \dK_d$; for, otherwise, we have $c(F)\ne S$ (as $|c(F)|=d$ by definition) and, hence, $f(F)\cdot \mathbf u_S=\mathbf u_{c(F)}\cdot \mathbf u_S=0$,  a contradiction. Hence, $F\in \dH_{1,1}\cup \dH_{1,2}$. Moreover, 
   \begin{itemize} 
      \item [(1)] if $F\in \dH_{1,1}$ then $c(F)\cap V=S$ (by the definition of $f$) and, thus, $F$ must be of the form $Sij$ and $c(F)= Si$, where $i\in [2]$ and $j\in [n]\setminus (S\cup [2])$, and 
      \item [(2)] if $F\in \dH_{1,2}$ then $F\cap V=S$ (by the definition of $f$) and   $F$ must be the set  $S12$ (by the definition of $\dH_{1,2}$).          
   \end{itemize} 

    We claim that $\{F\in \dH_{1,1}: f(F)\cdot \mathbf u_S\ne 0\}=\emptyset$ or $\{F\in \dH_{1,2}: f(F)\cdot \mathbf u_S\ne 0\}=\emptyset$.    
    For, otherwise, by (1) and (2), there exist  $F_1=Sij\in \dH_{1,1}$ (with $c(F_1)=Si$) and $F_2=S12\in \dH_{1,2}$;  but then $F_1\cap F_2=Sij\cap S12=Si=c(Sij)$, a contradiction. 

If $\{F\in \dH_{1,1}: f(F)\cdot \mathbf u_S\ne 0\}=\emptyset$ then we have, by (2),  
    $ \sum_{F\in \dF^3}f(F) \cdot \mathbf{u}_S =f(S12)\cdot \mathbf u_S \le 1$.    
If $\{F\in \dH_{1,2}: f(F)\cdot \mathbf u_S\ne 0\}=\emptyset$ then $\sum_{F\in \dF^3}f(F) \cdot \mathbf{u}_S  =
     \sum_{F\in \dH_{1,1}}f(F) \cdot \mathbf{u}_S$; so $\sum_{F\in \dF^3}f(F) \cdot \mathbf{u}_S \le 1$ by  Claim~\ref{clm: H1,2}.
   \end{proof}

Next we deal with those sets in $\dS_0$. By (iii) of Lemma~\ref{lem: good subfamily}, we see that,  for $F,F'\in \dK_{d-1}$ with $c(F)\ne c(F')$, we have $\dS_{F,c}\cap \dS_{F',c}=\emptyset$. Recall that for each $F\in \dK_{d-1}$, $f(F)$ is a convex combination of the vectors  $\{\mathbf{u}_{S}: S\in \dS_{F,c}\}$. Therefore, 
\begin{center}
if $S\in \dS_{F',c}$ for some $F'\in \dK_{d-1}$ then $f(F)\cdot \mathbf u_S=0$  for every $F\in \dK_{d-1}$ with $c(F)\ne c(F')$.
\end{center}
The next lemma deals with the case when  $S\in \dS_{F,c}$ for those $F\in \dF^3$ with  $|c^{-1}(c(F))\cap \dK_{d-1}|=2$ or $|c^{-1}(c(F))\cap \dK_{d-1}|=3$.  

\begin{Claim}\label{clm: type 2,3}
    Suppose $T\in c(\dF^3)$ and that $x,y,z\in [n]\setminus T$ are distinct, such that $c^{-1}(T)\cap \dK_{d-1}=\{Txy,Tyz,Tzx\}$ or $c^{-1}(T)\cap \dK_{d-1}=\{Txy,Tzx\}$. Then
     $ \sum_{F\in \dF^3}f(F) \cdot \mathbf{u}_S  \le 1$  for $S\in \dS_{Txy,c}\cup \dS_{Txz,c}=\{T,Tx,Ty,Tz\}$.
\end{Claim}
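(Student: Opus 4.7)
The plan is to fix $S \in \dS_{Txy,c} \cup \dS_{Txz,c} = \{T, Tx, Ty, Tz\}$ and bound $\sum_{F \in \dF^3} f(F) \cdot \mathbf{u}_S$ by analyzing contributions from each block of the partition $\dF^3 = \dH_{0,*} \cup \dH_{1,1} \cup \dH_{1,2} \cup \dK_d \cup \dK_{d-1}$. Inspecting Definition~\ref{dfn: f}, every nonzero component $\mathbf{u}_{S'}$ of $f(F)$ satisfies $S' \subseteq F$, so $f(F) \cdot \mathbf{u}_S \ne 0$ forces $T \subseteq F$. Applying Lemma~\ref{clm: aT details} to $\dG$, the family $\{F \in \dG : T \subseteq F\}$ equals $\{Txy, Tyz, Tzx\}$ in Case~(i), and $\{Txy, Txz, Tyz\} \cup \{Txu : u \in U\}$ for some $U \subseteq [n] \setminus (T \cup \{x, y, z\})$ in Case~(ii). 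The other possibility listed in Lemma~\ref{clm: aT details}(ii), with $Tyz$ absent, is ruled out here: every set in $\dG$ containing $T$ would then contain $x$, making $Ty$ a $\dG$-certificate of $Txy$ of size $d$ and contradicting the maximality of $c(Txy) = T$.

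Next I would dispose of the $\dK_{d-1}$ block. By Lemma~\ref{lem: good subfamily}(iii), for any $F' \in \dK_{d-1}$ with $c(F') \ne T$, the family $\dS_{F', c}$ is disjoint from $\dS_{Txy, c} \cup \dS_{Txz, c} = \{T, Tx, Ty, Tz\}$; since $f(F')$ is a convex combination of the vectors $\{\mathbf{u}_{S'} : S' \in \dS_{F', c}\}$, $f(F') \cdot \mathbf{u}_S = 0$. So only sets in $c^{-1}(T) \cap \dK_{d-1}$ can contribute, and direct substitution via Definition~\ref{dfn: f} yields contributions $0, 1, 1, 1$ to $S = T, Tx, Ty, Tz$ in Case~(i) and $0, 0, 1, 1$ in Case~(ii).

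Finally I would handle the remaining blocks. For $F \in \dH_{0,*} \cup \dK_d$, $f(F) = \mathbf{u}_{c(F)}$ with $|c(F)| = d$, so a contribution requires $c(F) \in \{Tx, Ty, Tz\}$; but Lemma~\ref{clm: aT details} forces any size-$d$ element of $c(\dG)$ to be contained in exactly one set of $\dG$, whereas $Tx$, $Ty$, $Tz$ are each contained in at least two sets of $\{F \in \dG : T \subseteq F\}$ in both cases, so none lies in $c(\dG)$ and the contribution is $0$. For $F \in \dH_{1,2}$, the contribution requires $F \cap V = S$ with $|S| = d-1$, forcing $S = T$ and $F = T12$, which is not in $\{F \in \dG : T \subseteq F\}$ in either case. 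For $F \in \dH_{1,1}$, $f(F) = \frac{1}{2}\mathbf{u}_{F \cap V} + \frac{1}{2}\mathbf{u}_{c(F) \cap V}$, and contribution requires $F \cap V = S$ (with $|S| = d$) or $c(F) \cap V = S$ (with $|S| = d-1$); in Case~(i) every set containing $T$ lies in $V$, so no $\dH_{1,1}$-set contains $T$ and the contribution is $0$; in Case~(ii) the only candidates are $F = Tx1$ or $Tx2$ (which arise if $1 \in U$ or $2 \in U$), each contributing at most $\frac{1}{2}$ to $S = Tx$ via $F \cap V = Tx$, at most $\frac{1}{2}$ to $S = T$ when $c(F) \in \{T1, T2\}$, and nothing to $S = Ty, Tz$. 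Summing, each $S \in \{T, Tx, Ty, Tz\}$ receives total at most $1$ in both cases. The main delicacy lies in Case~(ii): ruling out the $Tyz$-absent subcase of Lemma~\ref{clm: aT details}(ii), and using Lemma~\ref{lem: good subfamily}(iii) together with the non-certificate status of $Tx, Ty, Tz$ to verify that neither $Tyz$ nor the additional $Txu$'s with $u \in V$ contribute to any coordinate in $\{T, Tx, Ty, Tz\}$.
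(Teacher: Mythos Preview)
Your proof is correct and follows essentially the same strategy as the paper: restrict to $F\in\dF^3$ with $T\subseteq F$, use Lemma~\ref{lem: good subfamily}(iii) to eliminate contributions from $\dK_{d-1}$ outside $c^{-1}(T)$, and rule out the remaining blocks one by one. Two minor differences are worth noting. First, you prove the extra structural fact that the ``$Tyz$-absent'' alternative in Lemma~\ref{clm: aT details}(ii) cannot occur (since otherwise $Ty$ would be a size-$d$ $\dG$-certificate of $Txy$, contradicting $|c(Txy)|=d-1$); this is a correct observation the paper does not make, and you use it to argue that each of $Tx,Ty,Tz$ lies in at least two sets of $\dG$, whereas the paper instead argues directly that $c(F)=S\in\{Tx,Ty,Tz\}$ would force $F\in\{Txy,Txz\}$ via the uniqueness in Lemma~\ref{clm: aT details}. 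Second, for $S\in\{T,Tx\}$ in Case~(ii) you compute the $\dH_{1,1}$-contribution explicitly (at most two sets $Tx1,Tx2$, each giving $\tfrac12$), while the paper simply invokes Claim~\ref{clm: H1,2}; these amount to the same thing. One small implicit step you share with the paper: the hypothesis is on $c^{-1}(T)\cap\dK_{d-1}$, but you apply Lemma~\ref{clm: aT details} as though you knew $c_\dG^{-1}(T)$; this is justified because any extra member of $c_\dG^{-1}(T)$ would lie in $V$ (as $T\subseteq V$ and Lemma~\ref{clm: aT details} forces it to be of the form $Tab$ with $a,b\in\{x,y,z\}$), hence in $\dK_{d-1}$.
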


\begin{proof} 
    Note that, for $S\in \{T,Tx,Ty,Tz\}$, if $F\in \dF^3$ such that $f(F)\cdot \mathbf u_S\neq  0$ then, by Definition~\ref{dfn: f}, we have  $S\subseteq F$, which implies that $T\subseteq S\subseteq F$. Therefore, we only need to consider those sets $F\in \dF^3$ with $T\subseteq F$.

    Suppose $c^{-1}(T)\cap \dK_{d-1}=\{Txy,Tyz,Tzx\}$. Then by Lemma~\ref{clm: aT details}, $\{F\in \dF^3: T\subseteq F\}=\{Txy,Tyz,Tzx\}$. Thus,  by (i) of Definition~\ref{dfn: f}, we have  
  $$\sum_{F\in \dF^3}f(F) \cdot \mathbf{u}_S  =\left( f(Txy)+ f(Tyz)+ f(Tzx) \right) \cdot \mathbf{u}_S\\ 
    = (\mathbf{u}_{Tx}+\mathbf{u}_{Ty}+\mathbf{u}_{Tz})\cdot \mathbf u_S 
    \le 1.$$

    Now assume $c^{-1}(T)\cap \dK_{d-1}=\{Txy,Tzx\}$. 
    
    We claim that, for any  $F\in \dF^3\backslash \{Txy,Tzx\}$ such that $T\subseteq F$ and $f(F)\cdot \mathbf u_S\neq  0$ for some $S\in \{T,Tx,Ty,Tz\}$, we have $F\in \{Tx1,Tx2\}\cap \dH_{1,1}$. 
    First, we show that $F\notin \dK_{d-1}$ and $\notin \dH_{0,*}\cup \dK_d$. Suppose $F\in \dK_{d-1}$. Then $c(F)\ne T$ as $F\notin c^{-1}(T)\cap \dK_{d-1}$. Thus by (iii) of Lemma~\ref{lem: good subfamily}, $\dS_{F,c}\cap (\dS_{Txy,c}\cup \dS_{Txz},c)=\emptyset$. Hence, by Definition~\ref{dfn: f}, we have $f(F)\cdot \mathbf u_S= 0$ for all $S\in \{T,Tx,Ty,Tz\}$, a contradiction. 
    Next, suppose $F\in \dH_{0,*}\cup \dK_{d}$. Then $|c(F)|=d$ and $c(F)=S$. Hence, by Lemma~\ref{clm: aT details}, $F$ is the only set in $\dF^3$ containing $c(F)$; this is a contradiction as $c(F)\subseteq  Txy\in \dF^3$ or $c(F)\subseteq Txz\in \dF^3$, and $F\notin \{Txy,Txz\}$. Therefore, $F\in \dH_{1,2}\cup \dH_{1,1}$, which implies $F\cap [2]\neq \emptyset$.   Note that $Txyz\subseteq V$ since $\{Txy,Txz\}\subseteq \dK_{d-1}$. Hence, by (ii) of Lemma~\ref{clm: aT details},  $F\in \{Tx1,Tx2\}\cap \dH_{1,1}$, completing the proof of the claim.   
    
    By the above claim, we see that,  for all $S\in  \{T,Tx,Ty,Tz\}$,
    $$ \sum_{F\in \dF^3}f(F) \cdot \mathbf{u}_S =\left(f(Txy)+f(Txz)+\sum_{F\in \{Tx1,Tx2\}\cap \dH_{1,1} }f(F)  \right)\cdot \mathbf{u}_S  .$$ 
    Recall that $f(Txy)=\mathbf{u}_{Ty}$ and  $f(Txz)=\mathbf{u}_{Tz}$. For $S\in \{Ty,Tz\}$, we see that $S\not\subseteq Tx1$ and $S\not\subseteq Tx2$; so  $ \sum_{F\in \dF^3}f(F) \cdot \mathbf{u}_S  =  (f(Txy)+f(Txz))\cdot \mathbf{u}_{S}  \le  1$. For $S\in \{T ,Tx\}$, we have  $\sum_{F\in \dF^3}f(F) \cdot \mathbf{u}_{S}=\sum_{F \in \dH_{1,1}}f(F) \cdot \mathbf{u}_{S}$; so  by Claim~\ref{clm: H1,2}, $\sum_{F\in \dF^3}f(F) \cdot \mathbf{u}_{S}\le  1$.
   \end{proof}

We now show that we can specify the convex combination for $f(Txy)$ in (iii) of Definition~\ref{dfn: f}  such that $ \sum_{F\in \dF^3}f(F) \cdot \mathbf{u}_S  \le 1$ for all $S\in \{T,Tx,Ty\}$. 

\begin{Claim}\label{clm: type 1}
   Let $T\in c(\dF^3)$ and $x,y\in [n]\setminus T$ such that $c^{-1}(T)\cap \dK_{d-1}=\{Txy\}$. The convex combination of $\mathbf u_T, \mathbf u_{Tx}, \mathbf u_{Ty}$ for $f(Txy)$ in (iii) of Definition~\ref{dfn: f} may be chosen so that $ \sum_{F\in \dF^3}f(F) \cdot \mathbf{u}_S  \le 1$ for all $S\in \dS_{Txy,c}=\{T,Tx,Ty\}$. 
\end{Claim}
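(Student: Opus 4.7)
The plan is to set $f(Txy) = \alpha_T \mathbf{u}_T + \alpha_{Tx} \mathbf{u}_{Tx} + \alpha_{Ty} \mathbf{u}_{Ty}$ with $\alpha_S \in \{0, \tfrac12, 1\}$ summing to $1$, and to choose the coefficients based on the ``background'' quantities $a_S := \sum_{F \in \dF^3 \setminus \{Txy\}} f(F) \cdot \mathbf{u}_S$ for $S \in \{T, Tx, Ty\}$. The claim then reduces to finding $\alpha$'s satisfying $\alpha_S + a_S \le 1$ for every $S$.

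First I would compute $a_S$ by going through the partition of $\dF^3$ into $\dH_{0,*}, \dH_{1,1}, \dH_{1,2}, \dK_d, \dK_{d-1}$. Three observations collapse this to a small count. Applying Lemma~\ref{clm: aT details} to $\dG$ with certificate $T$ gives $\{F \in \dG : T \subseteq F\} = \{Txy\} \cup \{Txu : u \in U\} \cup \{Tyv : v \in V'\}$ for suitable $U, V' \subseteq [n]\setminus(T\cup\{x,y\})$; since $x,y \in V$, this forces $T12 \notin \dG$ and eliminates the only possible contribution to $a_T$ from $\dH_{1,2}$. Lemma~\ref{clm: aT details}'s uniqueness for size-$d$ certificates then rules out $c(F) \in \{Tx, Ty\}$ for any $F \in \dG$, since such an $F$ would be the unique element of $\dG$ containing $c(F)$ and would have to equal $Txy$, contradicting $c(Txy) = T$; this kills every contribution from $\dH_{0,*} \cup \dK_d$. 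Finally, item (iii) of Lemma~\ref{lem: good subfamily} eliminates contributions from $\dK_{d-1} \setminus \{Txy\}$. After this, only $\dH_{1,1}$ contributes: $a_T$ picks up $\tfrac12$ for each $F \in \dH_{1,1}\cap\dF^3$ with $c(F) \in \{T1, T2\}$, while $a_{Tx}$ picks up $\tfrac12$ for each of $Tx1, Tx2$ lying in $\dH_{1,1}\cap\dF^3$ (and $a_{Ty}$ is the symmetric count). Because $T1, T2$ each have size $d$, each is the certificate of at most one set in $\dG$, and thus $a_T, a_{Tx}, a_{Ty} \in \{0, \tfrac12, 1\}$.

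The core step is to show that at most one of $a_T, a_{Tx}, a_{Ty}$ can equal $1$ unless the remaining coordinate vanishes. If $a_{Tx} = a_{Ty} = 1$, then $Tx1$ and $Ty1$ both lie in $\dG$ and both contain $T1$, so Lemma~\ref{clm: aT details}'s uniqueness forbids $T1$ from being a certificate, and likewise for $T2$; hence $a_T = 0$. If $a_T = a_{Tx} = 1$, the unique $F \in \dG$ with $c(F) = T1$ must lie in the Lemma~\ref{clm: aT details}(iii) list and contain $1$, so $F \in \{Tx1, Ty1\}$; the hypothesis $a_{Tx} = 1$ gives $Tx1 \in \dG$, and uniqueness then forces $F = Tx1$ and $Ty1 \notin \dG$. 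The analogous argument with $T2$ and $Tx2$ gives $Ty2 \notin \dG$, so $a_{Ty} = 0$. The case $a_T = a_{Ty} = 1$ is symmetric.

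Given these structural facts, the coefficient choice is immediate. If two of $a_T, a_{Tx}, a_{Ty}$ equal $1$ (so the third is $0$), place all the weight on the zero coordinate. Otherwise at most one of them equals $1$; pick two coordinates $S$ with $a_S \le \tfrac12$ (at least two such exist) and set $\alpha_S = \tfrac12$ for each, with the third coefficient being $0$. Every constraint $\alpha_S + a_S \le 1$ is then satisfied. The main obstacle is the contribution bookkeeping for $a_S$---in particular recognizing $T12 \notin \dG$ and tracking how the four sets $Tx1, Tx2, Ty1, Ty2$ interact through the uniqueness clause of Lemma~\ref{clm: aT details}---but once this is in place, the coefficient choice is automatic.
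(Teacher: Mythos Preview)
Your proof is correct and follows essentially the same approach as the paper: both reduce the problem to showing that the only sets in $\dF^3\setminus\{Txy\}$ contributing to $\sum_{F}f(F)\cdot\mathbf{u}_S$ for $S\in\{T,Tx,Ty\}$ lie in $\dH_{1,1}\cap\{Tx1,Tx2,Ty1,Ty2\}$, and then choose the coefficients of $f(Txy)$ based on which of these contributions are present. Your casework on the pattern of $(a_T,a_{Tx},a_{Ty})$ is a slightly more symmetric packaging of the paper's case split on $|\{H\in\dH_{1,1}:c(H)\cap V=T\}|$, but the underlying structural arguments (via the uniqueness clause of Lemma~\ref{clm: aT details} and the disjointness in Lemma~\ref{lem: good subfamily}(iii)) are the same.
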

\begin{proof}
    For any $F\in \dF^3\backslash \{Txy\}$ such that $f(F)\cdot \mathbf u_S\neq  0$ for some $S\in \{T,Tx,Ty\}$, we have, by Definition~\ref{dfn: f},  $T\subseteq S\subseteq F$.
    The same argument as in the proof of Claim~\ref{clm: type 2,3} shows that  
    $F\in \dH_{1,2}\cup \dH_{1,1}$ and, hence, $F\cap [2]\neq \emptyset$. Since $Txy\subseteq V$ (as $Txy\in \dK_{d-1}$), it follows from   (iii) of Lemma~\ref{clm: aT details} that  $F\in \{Tx1,Tx2,Ty1,Ty2\}\cap \dH_{1,1}$.  
    Hence, for all $S\in  \{T,Tx,Ty\}$,
    $$ \sum_{F\in \dF^3}f(F) \cdot \mathbf{u}_S =\left( f(Txy)+\sum_{F\in \{Tx1,Tx2,Ty1,Ty2\}\cap \dH_{1,1} }f(F)  \right)\cdot \mathbf{u}_S . $$ 

     Recall that $f(H)=\frac{1}{2}\mathbf{u}_{H\cap V} + \frac{1}{2}\mathbf{u}_{c(H)\cap V}  $ for $H\in \dH_{1,1}$. We now specify the convex combination for $f(Txy)$ according to the size of the family $\{H\in \dH_{1,1}: c(H)\cap V=T\}$.

\medskip
Case 1. $|\{H\in \dH_{1,1}: c(H)\cap V=T\}|=0$. 

In this case, we define  
\[f(Txy)=\mathbf{u}_{T}.\] 
For $H\in \dH_{1,1}$, $H\cap V\ne T$ (as $|H\cap V|=d$) and $c(H)\cap V\ne T$ (as $|\{H\in \dH_{1,1}: c(H)\cap V=T\}|=0)$; so $f(H)\cdot \mathbf u_T= (\frac{1}{2}\mathbf{u}_{H\cap V} + \frac{1}{2}\mathbf{u}_{c(H)\cap V}) \cdot \mathbf u_T = 0$.
Therefore,  $\sum_{F\in \dF^3}f(F) \cdot \mathbf{u}_T  = f(Txy) \cdot \mathbf{u}_T = 1$.
For any  $S\in \{Tx,Ty\}$, $f(Txy)\cdot \mathbf u_S=\mathbf u_T\cdot \mathbf u_S=0$; so 
 $\sum_{F\in \dF^3}f(F) \cdot \mathbf{u}_S  = \sum_{F\in \dH_{1,1}}f(F) \cdot \mathbf{u}_S  \le 1$, by Claim~\ref{clm: H1,2}.

\medskip
Case 2. $|\{H\in \dH_{1,1}: c(H)\cap V=T\}|=1$. 

Let $H\in \dH_{1,1}$ satisfying $c(H)\cap V= T$. Then $f(H) \cdot \mathbf{u}_T>0$, which implies that $H=Tai$ and $c(H)=Ti$, for some $i\in [2]$ and $a\in \{x,y\}$. Thus, by the definition of $f$, $f(Tai)=\frac{1}{2}\mathbf{u}_{T}+\frac{1}{2}\mathbf{u}_{Ta}$. Let $b\in \{x,y\}\setminus \{a\}$ and $j\in [2]\setminus \{i\}$. Note that $Tbi\notin \dF^3$ since $Tbi\cap Tai=Ti=c(Tai)$. 
Therefore, $\{Tx1,Tx2,Ty1,Ty2\}\cap \dH_{1,1}=\{Tai,Taj,Tbj\}\cap \dH_{1,1}$. We define
\[f(Txy)=\frac{1}{2}\mathbf{u}_{T}+\frac{1}{2}\mathbf{u}_{Tb}.\] 

Clearly, $f(Txy)\cdot \mathbf u_{Ta}=0$ and, hence,
$\sum_{F\in \dF^3}f(F) \cdot \mathbf{u}_{Ta}  = \sum_{F\in  \dH_{1,1}}f(F) \cdot \mathbf{u}_{Ta}$; so $\sum_{F\in \dF^3}f(F) \cdot \mathbf{u}_{Ta}\le 1$ by Claim~\ref{clm: H1,2}.    Since $c(Tbj)\cap V\ne Tb$ when $Tbj\in \dH_{1,1}$, we have $f(Tbj)\cdot \mathbf u_{Tb}\le 1/2$. Since $Tb\not\subseteq Ta1$ and $Tb\not\subseteq Ta2$, we see that
$\sum_{F\in \dF^3}f(F) \cdot \mathbf{u}_{Tb}  = f(Txy)\cdot \mathbf u_{Tb}+\sum_{F\in \{Tbj\}\cap \dH_{1,1} } f(F)\cdot \mathbf u_{Tb}  \le \frac 12+\frac 12=1$.

 Note that, for $F\in \{Tai,Taj,Tbj\}\cap \dH_{1,1}$, we have $F\cap V\ne T$, as well as $c(F)\cap V\ne T$ unless $F=Tai$.  Hence, $f(F) \cdot \mathbf{u}_T>0$ implies $F=Tai$.  Therefore,  $ \sum_{F\in \dF^3}f(F) \cdot \mathbf{u}_T  = f(Txy)\cdot \mathbf u_T +f(Tai)\cdot \mathbf u_T=\frac 12+\frac 12= 1$.

\medskip

Case 3. $|\{H\in \dH_{1,1}: c(H)\cap V=T\}|\ge 2$. 

Let $H_1,H_2\in \dH_{1,1}$, such that $c(H_i)\cap V = T$ for $i\in [2]$. We may further assume $c(H_i)=Ti$ for $i\in [2]$. 
Moreover, for any set $H\in \dH_{1,1}\setminus \{H_1,H_2\}$, we have $T\not\subseteq H$; since, otherwise, $H=Tuj$ for some $j\in [2]$ and $u\in V\setminus T$ which implies $H\cap H_j=Tj=c(H_j)$, a contradiction. 
Hence, for all $S\in  \{T,Tx,Ty\}$,
    $$ \sum_{F\in \dF^3}f(F) \cdot \mathbf{u}_S =\left(  f(H_1) +f(H_2) +f(Txy) \right)\cdot \mathbf{u}_S . $$

First, consider the case when $H_1\cap H_2\ne T$. Then there exists $a\in \{x,y\}$ such that $H_1\cap H_2=Ta$; so we may assume $H_1=Ta1$ and  $H_2=Ta2$. By the definition of $f$,  $f(H_1)=\frac{1}{2}\mathbf u_{T}+\frac{1}{2}\mathbf u_{Ta}$ and $f(H_2)=\frac{1}{2}\mathbf u_{T}+\frac{1}{2}\mathbf u_{Ta}$.
Let $b\in \{x,y\}\setminus \{a\}$ and define
\[ f(Txy)=\mathbf u_{Tb}, 
\]
Then, for each $S\in  \{T,Tx,Ty\}$, $\left(  f(H_1) +f(H_2) +f(Txy) \right)\cdot \mathbf{u}_S\le 1$. So  $  \sum_{F\in \dF^3}f(F) \cdot \mathbf{u}_S \le 1 . $ 
  
Now suppose $H_1\cap H_2= T$. Then we may assume  $H_1=Ta1$ and $H_2=Tb2$, where  $\{a,b\}= \{x,y\}$.  It follows that $f(H_1)=\frac{1}{2}\mathbf u_{T}+\frac{1}{2}\mathbf u_{Ta}$ and $f(H_2)=\frac{1}{2}\mathbf u_{T}+\frac{1}{2}\mathbf u_{Tb}$. Define 
\[
f(Txy)=\frac{1}{2}\mathbf{u}_{Tx}+\frac{1}{2}\mathbf{u}_{Ty}.
\]
Then, for each $S\in  \{T,Tx,Ty\}$,  $\left(  f(H_1) +f(H_2) +f(Txy) \right)\cdot \mathbf{u}_S\le 1$. So $ \sum_{F\in \dF^3}f(F) \cdot \mathbf{u}_S\le 1$. 
\end{proof}

\begin{proof}[Proof of Lemma~\ref{lem: f G'}] 

By Lemma~\ref{lem: good subfamily}, there exist a family $\mathcal{G}\subseteq \dF$ and an assignment $c:=c_{\dG}$ of maximum $\dG$-certificates of the sets in $\dG$, satisfying conclusions (i)--(iii) of Lemma~\ref{lem: good subfamily}.  By Lemma~\ref{lem: final refinement}, there exist distinct $i,j\in [n]$ such that $i,j,\dG(i,j),\dG_{d-1}(i),\dG_{d-1}(j)$ satisfy conclusions (i) -- (iii) of Lemma~\ref{lem: final refinement}. We may assume $i=1$ and $j=2$. 
Set $V:=[n]\setminus [2]$,  $\dF^1:=\dF\setminus \dG$, $\dF^2:=\dG(1,2)\cup\dG_{d-1}(1)\cup \dG_{d-1}(2)$, and $\dF^3:=\dF\setminus (\dF^1\cup \dF^2)$. Then 
\[\dG=\dF^2\cup \dF^3, ~ |\dF^1\cup\dF^2|\le 0.1|\overline{\partial \dF}|+O(n^{d-2}),  \mbox{ and } \left|{V\choose d}\cap \overline{\partial \dG}\right|=(1-o(1))|\overline{\partial \dG}|. \]
Thus (i) holds. 

Let $\dS={V\choose d-1}\cup \left({V\choose d}\setminus \overline{\partial \dF^3}\right)$ and $\mathbf U_{\dS}=\{\mathbf u_S:S\in \dS\}$ be the standard basis of $\mathbb{R}^{|\dS|}$ indexed by a fixed linear ordering of the sets in $\dS$. 
Let $f$ be defined as in Definition~\ref{dfn: f} and in the proof of Claim~\ref{clm: type 1}. Then by Claim~\ref{clm: mapping size}, $f$ satisfies  (ii). 
Claim~\ref{clm: coordinate S} shows that (iii)  holds for $S\in \dS\setminus \dS_0$, where $\dS_0=\bigcup_{F\in \dK_{d-1}}S_{F,c}$. 
Note that $\dS_0$ can be partitioned into pairwise disjoint families $\dS_{F,c}=\{S\in {F\choose d-1}\cup {F\choose d}: c(F)\subseteq S\}$ with $F\in \dK_{d-1}$. 
 Combining Claims~\ref{clm: type 2,3} and \ref{clm: type 1}, we know that conclusion (iii) also holds for $S\in \dS_0$.
\end{proof}

\section{Conclusion}

In this section, we complete the proof of Theorem~\ref{thm: main} and mention several interesting open problems. First, we modify the function $f$ to obtain an injection $g: \dF^3\to  \mathbf U$, the standard basis of $\mathbb{R}^m$ (for an appropriate choice of $m$). 
  
 \begin{lem}\label{lem: map G' to 1}
Let $n,d$ be positive integers and let $\dF\subseteq \binom{[n]}{d+1}$ be a family with VC$(\dF) \le d$. Then there exist $V\in {[n]\choose n-2}$ and a partition of $\dF$ into families $\dF^1$, $\dF^2$ and $\dF^3$, such that $|\dF^1\cup \dF^2|\le 0.1|\overline{\partial \dF}|+O(n^{d-2})$ and $\left|{V\choose d}\cap \overline{\partial \dG}\right|=(1-o(1))|\overline{\partial \dG}|$, where $\dG=\dF^2\cup \dF^3$, and there exists  an injection $g: \dF^3 \to \mathbf U_{\dS}$, where    $\mathbf U_{\dS}$ is the standard basis of $\mathbb{R}^{|\dS|}$ indexed by a fixed linear ordering of the sets in $\dS=\binom{V}{d-1} \cup \left(\binom{V}{d}\setminus \overline{\partial \dF^3}\right)$. 
\end{lem}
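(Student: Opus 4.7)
My plan is to derive Lemma~\ref{lem: map G' to 1} almost entirely from Lemma~\ref{lem: f G'} by turning the fractional map $f$ into an injection via Hall's marriage theorem. Lemma~\ref{lem: f G'} already supplies $V$, the partition $\dF=\dF^1\cup \dF^2\cup \dF^3$, the index family $\dS=\binom{V}{d-1}\cup\bigl(\binom{V}{d}\setminus \overline{\partial \dF^3}\bigr)$, and a map $f:\dF^3\to \mathbf U_{\dS}$ whose values are either a single basis vector or a $\tfrac12$--$\tfrac12$ combination of two. The two size inequalities in the statement are exactly those in (i) of Lemma~\ref{lem: f G'}, so the only new task is to construct the injection $g:\dF^3 \to \mathbf U_{\dS}$.

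For that, I would view $f$ as a fractional matching. Form the bipartite graph $B$ with parts $\dF^3$ and $\dS$, with $FS$ an edge whenever $f(F)\cdot \mathbf u_S>0$, and assign that edge the weight $w(F,S):=f(F)\cdot \mathbf u_S$. Property (ii) of Lemma~\ref{lem: f G'} guarantees $\sum_{S\in \dS} w(F,S)=1$ for every $F\in \dF^3$, while property (iii) guarantees $\sum_{F\in \dF^3} w(F,S)\le 1$ for every $S\in \dS$. Hence $w$ is a fractional matching of $B$ saturating the side $\dF^3$, and this immediately yields Hall's condition on that side: for any $\dF'\subseteq \dF^3$ with neighborhood $N(\dF')\subseteq \dS$ in $B$,
\[
|\dF'|=\sum_{F\in \dF'}\sum_{S\in \dS} w(F,S)=\sum_{S\in N(\dF')}\sum_{F\in \dF'} w(F,S)\le \sum_{S\in N(\dF')} 1 = |N(\dF')|.
\]

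By Hall's marriage theorem, $B$ admits a matching $M$ saturating $\dF^3$; setting $g(F)=\mathbf u_S$ whenever $FS\in M$ produces the required injection. I do not expect any real obstacle in this step: the delicate balancing has already been carried out in Lemma~\ref{lem: f G'} (in particular in Claim~\ref{clm: type 1}, where the convex-combination coefficients for $f(Txy)$ were chosen precisely so that every coordinate of every $S\in \dS$ receives total weight at most $1$), and the rounding of a fractional bipartite matching saturating one side to an integer matching saturating that same side is a standard consequence of Hall's theorem.
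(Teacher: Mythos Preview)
Your proof is correct and follows the same overall plan as the paper: invoke Lemma~\ref{lem: f G'} to obtain $V$, the partition, $\dS$, and the fractional map $f$, and then round $f$ to an injection $g:\dF^3\to \mathbf U_{\dS}$. The only difference is in how the rounding is carried out. The paper splits $\dF^3$ into $\dF^3_1=\{F: f(F)=\mathbf u_S\}$ and $\dF^3_2=\{F: f(F)=\tfrac12\mathbf u_S+\tfrac12\mathbf u_{S'}\}$, and uses (iii) directly: the index sets $\mathbf U_1=f(\dF^3_1)$ and $\mathbf U_2=\{\mathbf u_S:\mathbf u_S\text{ occurs in some }f(F),\,F\in\dF^3_2\}$ are disjoint, $f$ is already injective on $\dF^3_1$, and a double count shows $|\dF^3_2|\le|\mathbf U_2|$, so any injection $\dF^3_2\to\mathbf U_2$ combined with $f|_{\dF^3_1}$ works. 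Your argument instead interprets (ii) and (iii) as a fractional matching saturating $\dF^3$ and appeals to Hall's theorem. Both routes are short and valid; yours is the more canonical one and would go through verbatim even if $f(F)$ were an arbitrary convex combination of basis vectors, while the paper's is slightly more elementary in that it avoids citing Hall.
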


\begin{proof}
    By Lemma~\ref{lem: f G'}, there exist $V\in {[n]\choose n-2}$ and a partition $\dF^1\cup \dF^2\cup \dF^3$ of $\dF$, such that $|\dF^1\cup \dF^2|\le 0.1|\overline{\partial \dF}|+O(n^{d-2})$ and $\left|{V\choose d}\cap \overline{\partial \dG}\right|=(1-o(1))|\overline{\partial \dG}|$, where $\dG=\dF^2\cup \dF^3$, and there exists a function $f:\dF^3\to \mathbb{R}^{|\dS|}$ satisfying (i)--(iii) of Lemma~\ref{lem: f G'}, where $\mathbf U_{\dS}$ is the standard basis of $\mathbb{R}^{|\dS|}$ indexed by a fixed linear ordering of the sets in $\dS=\binom{V}{d-1} \cup \left(\binom{V}{d}\setminus \overline{\partial \dF^3}\right)$.   We now modify $f$ to obtain the desired injection $g$. 
    
     Let $\dF_1^3=\{F\in \dF^3: f(F)=\mathbf u_S \mbox{ for some } S\in \dS\}$, and $\dF_2^3=\{F\in \dF^3: f(F)=\frac 12 \mathbf u_S +\frac 12 \mathbf u_{S'} \mbox{ for some distinct } S,S'\in \dS\}$. Then, by (ii) of  Lemma~\ref{lem: f G'},   $\dF^3_1\cap \dF^3_2=\emptyset$ and $\dF^3=\dF^3_1 \cup \dF^3_2$.  Let $\mathbf{U}_1=\{f(F): F \in \dF^3_1\}$, and let $\mathbf{U}_2=\{\mathbf u_S,\mathbf u_{S'}: \text{ there exists } F\in \dF^3_2 \text{ such that } f(F) = \frac 12 \mathbf u_S+\frac 12\mathbf u_{S'}\}$. Thus, $\mathbf U_i\subseteq \mathbf U_{\dS}$ for $i\in [2]$. 

Since  $\sum_{F \in \dF^3}  f(F)\cdot \mathbf{u}_S  \le 1$ for all $S\in \dS$ (by  (iii) of Lemma~\ref{lem: f G'}), we know that   $\mathbf{U}_1$ and $\mathbf{U}_2$ are disjoint, and each vector in  $\mathbf{U}_2$ appears in $f(F)$ for at most two different sets $F$ in $\dF^3_2$. Therefore, $|\dF^3_2|\le |\mathbf{U}_2|$, since, for each $F\in \dF^3_2$, $f(F)$ involves two standard basis vectors in $\mathbf U_2$.
 Hence, we may let $f':\dF_2^3\to \mathbf U_2$ be an arbitrary injection. 
 
    Now we define $g: \dF^3 \to \mathbf U_{\dS}$, such that $g(F)=f(F)$ for  $F\in \dF^3_1$ and $g(F)=f'(F)$ for $F\in \dF^3_2$. Note that $g(F_1)\ne g(F_2)$ when $F_1,F_2\in \dF^3$ are distinct. Thus, $g$ is an injection and, for each $F \in \dF^3$, $g(F)$ is a standard basis vector of $\mathbb{R}^{|\dS|}$.
\end{proof}

\begin{proof}[Proof of Theorem~\ref{thm: main}]
    Let $\dF\subseteq \binom{[n]}{d+1}$ be a family with VC$(\dF) \le d$. 
    By Lemma~\ref{lem: map G' to 1}, there exist $V\in {[n]\choose n-2}$ and a partition of $\dF$ into families $\dF^1$, $\dF^2$ and $\dF^3$, such that $|\dF^1\cup \dF^2|\le 0.1|\overline{\partial \dF}|+O(n^{d-2})$ and $\left|{V\choose d}\cap \overline{\partial \dG}\right|=(1-o(1))|\overline{\partial \dG}|$ (where $\dG=\dF^2\cup \dF^3$) and there exists an injection $g: \dF^3 \to \mathbf U_{\dS}$, where  $\mathbf U_{\dS}$ is the standard basis of $\mathbb{R}^{|\dS|}$ indexed by a fixed linear ordering of the sets in  $\dS=\binom{V}{d-1} \cup \left(\binom{V}{d}\setminus \overline{\partial \dF^3}\right)$.  Therefore,   
\begin{align*}
    |\dF|&=|\dF^1\cup \dF^2|+|\dF^3|\\
   &\le 0.1|\overline {\partial \dF}|+O(n^{d-2})+|\dS| \\
   &=0.1|\overline {\partial\dF}|+O(n^{d-2})+{n-2\choose d-1}+{n-2\choose d}-|\overline{\partial \dF^3}|\\
   &\le  0.1|\overline {\partial\dG}|+O(n^{d-2})+{n-1\choose d}- |\overline{\partial \dG}| \quad \text{ (since }\overline{\partial \dG}\subseteq \overline{\partial \dF^3})\\
   &={n-1\choose d}+O(n^{d-2})  -0.9|\overline{\partial\dG}|\\
   &\le {n-1\choose d}+O(n^{d-2}),
   \end{align*}
    as desired.
\end{proof}

Note that the above proof actually gives $|\dF|\le {n-1\choose d}+O(n^{d-2})  -0.9|\overline{\partial\dG}|$. Since $\overline{\partial \dF}\subseteq \overline{\partial \dG}$ (as $\dG\subseteq \dF$), we have the following consequence. 
\begin{corollary}\label{cor: partial}
    Let $n,d$ be positive integers and let $\dF\subseteq \binom{[n]}{d+1}$ with  VC$(\dF) \le d$.  Then   $$|\overline{\partial \dF}|\le \frac{10}{9} \left(\binom{n-1}{d}- |\dF|+ O(n^{d-2}) \right).$$

\end{corollary}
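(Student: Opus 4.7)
The plan is to extract the sharper intermediate bound from the proof of Theorem~\ref{thm: main} (rather than its final stated form) and then rearrange. Tracing through the chain of inequalities at the end of that proof, before discarding the nonpositive term $-0.9|\overline{\partial \dG}|$, we obtain
\[
|\dF| \;\le\; \binom{n-1}{d} + O(n^{d-2}) - 0.9\,|\overline{\partial \dG}|,
\]
where $\dG = \dF^2 \cup \dF^3$ is the subfamily of $\dF$ produced by Lemma~\ref{lem: map G' to 1}. This is exactly the inequality noted just before the corollary, so no new combinatorial work is needed.

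The second ingredient is the containment $\overline{\partial \dF}\subseteq \overline{\partial \dG}$. This follows immediately from $\dG\subseteq \dF$: any $d$-set appearing as a subset of some member of $\dG$ also appears as a subset of some member of $\dF$, so $\partial \dG\subseteq \partial \dF$, and complementing inside $\binom{[n]}{d}$ reverses the inclusion. In particular $|\overline{\partial \dF}|\le |\overline{\partial \dG}|$.

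Combining the two, I would write
\[
0.9\,|\overline{\partial \dF}| \;\le\; 0.9\,|\overline{\partial \dG}| \;\le\; \binom{n-1}{d} - |\dF| + O(n^{d-2}),
\]
and divide by $0.9$ to conclude
\[
|\overline{\partial \dF}| \;\le\; \frac{10}{9}\left(\binom{n-1}{d} - |\dF| + O(n^{d-2})\right),
\]
which is the claimed bound.

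There is really no main obstacle here: the work has all been done inside Theorem~\ref{thm: main}, and the corollary amounts to reading off the strengthened bound and applying a trivial monotonicity of the shadow under subfamily inclusion. The only thing to be slightly careful about is keeping the $O(n^{d-2})$ error term on the correct side of the inequality when rearranging, which is handled by absorbing the factor $10/9$ into the implicit constant.
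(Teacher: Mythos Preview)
Your proposal is correct and follows essentially the same route as the paper: the paper too simply notes that the proof of Theorem~\ref{thm: main} already yields $|\dF|\le \binom{n-1}{d}+O(n^{d-2})-0.9|\overline{\partial\dG}|$, observes $\overline{\partial \dF}\subseteq \overline{\partial \dG}$ since $\dG\subseteq \dF$, and reads off the corollary.
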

 If $\dF$ in Corollary ~\ref{cor: partial} is a  maximum size such family, then $|\dF|\ge {n-1\choose d}+{n-4\choose d-2}$ and, hence, we have $|\overline{\partial \dF}|=O(n^{d-2})$.  Theorem~\ref{thm: main} asymptotically determines the maximum size of a subfamily of ${[n]\choose d+1}$ with VC-dimension at most $d$, as the best lower bound is ${n-1\choose d}+{n-4\choose d-2}$. It appears that in general the exact maximum size of such a uniform set family is difficult to determine. However,   Wang, Xu, and Zhang \cite{25WXZ}  recently determined  the maximum size of a 3-uniform family with VC-dimension 2.
 
 An original motivation for the initial conjectured bound of ${n-1\choose d}$ of Frankl and Pach \cite{84FP} (also mentioned by Erd\H{o}s \cite{84Er})  was to generalize the famous Erd\H{o}s-Ko-Rado theorem. However, it turned to be not the case.  In an attempt to remedy this situation, Chao {\em et al} \cite{CXYZ} considered a uniform version of the Frankl-Pach conjecture, which would provide an interesting generalization of the Erd\H{o}s-Ko-Rado theorem. 

 \begin{conjecture}[Chao, Xu, Yip, and Zhang, 2024]\label{conj: uniform}
 Let $n\ge 2(d+1)$ and let $0\le s\le d$. Assume $\dF\subseteq {[n]\choose d+1}$ such that for every $F\in \dF$, there exists an $\dF$-certificate of $F$ of order $s$.
 Then $|\dF|\le {n-1\choose d}$. 
 \end{conjecture}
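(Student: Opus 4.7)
The plan is to handle the two extreme values of $s$ by classical tools and then interpolate. For $s=0$, a size-$0$ certificate of $F$ is $\emptyset$, so the condition $F'\cap F\neq \emptyset$ for all $F,F'\in \dF$ is exactly the intersecting property, and the Erd\H{o}s--Ko--Rado theorem (applicable since $n\ge 2(d+1)$) immediately gives $|\dF|\le \binom{n-1}{d}$. For $s=d$, each $F\in \dF$ admits $T_F\in \binom{F}{d}$ with $F'\cap F\neq T_F$ for every $F'\in \dF$; if $F'\neq F$ and $T_F\subseteq F'$, then $T_F\subseteq F'\cap F$ together with $|F'\cap F|\le d=|T_F|$ would force $F'\cap F=T_F$, contradicting the certificate. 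Hence $T_F\not\subseteq F'$ for every $F'\in \dF\setminus\{F\}$. Setting $A_F:=T_F$ and $B_F:=[n]\setminus F$, we have $|A_F|=d$, $|B_F|=n-d-1$, $A_F\cap B_F=\emptyset$, and $A_F\cap B_{F'}=T_F\setminus F'\neq \emptyset$ whenever $F\neq F'$. The Bollob\'as set-pair inequality then yields $|\dF|\cdot\binom{n-1}{d}^{-1}\le 1$, i.e., $|\dF|\le \binom{n-1}{d}$.

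For intermediate $0<s<d$, I would fix an assignment $c:\dF\to \binom{[n]}{s}$ sending each $F$ to one of its size-$s$ certificates and group the family as $\dF_T:=c^{-1}(T)$. For any $F_1,F_2\in \dF_T$ with $F_1\ne F_2$, the certificate condition $F_1\cap F_2\ne T$ combined with $T\subseteq F_1\cap F_2$ gives $(F_1\setminus T)\cap (F_2\setminus T)\neq \emptyset$, so $\{F\setminus T:F\in \dF_T\}$ is a pairwise intersecting $(d+1-s)$-uniform family on the $(n-s)$-set $[n]\setminus T$; by EKR (valid since $n\ge 2(d+1)$ implies $n-s\ge 2(d+1-s)$), $|\dF_T|\le \binom{n-s-1}{d-s}$. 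Summing over $T\in \binom{[n]}{s}$ is far too weak, so the next step is to install a Bollob\'as-type global inequality. The natural attempt is again to take $A_F:=T_F$ and $B_F:=[n]\setminus F$, for which $A_F\cap B_{F'}\ne \emptyset$ is equivalent to $T_F\not\subseteq F'$, a \emph{strengthening} of the raw certificate condition $F\cap F'\ne T_F$ (they coincide only when $s=d$). A key preparatory step is therefore a reselection of $c$---greedy, or obtained by a shifting/compression argument adapted to the certificate structure---to enforce $T_F\not\subseteq F'$ whenever possible, with a link-at-a-vertex fallback reducing $(n,d,s)$ to $(n-1,d-1,s-1)$ at vertices lying in $T_F$.

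The principal obstacle is that the condition ``every $F\in \dF$ has an $\dF$-certificate of order exactly $s$'' is neither monotone in $s$ nor preserved under deletion of a ground-set element or under the classical shifting operation, so the usual inductive and compression tools do not apply out of the box. A promising combined approach is to sharpen Lemma~\ref{clm: T-F-number} to the uniform-$s$ setting (bounding $|c^{-1}(T)|$ in a way matching the EKR link bound for $\dF_T$) and to feed this into a weighted skew Bollob\'as inequality tuned to the sizes $s$ and $d+1-s$, balancing contributions so that the two extremes $s=0$ and $s=d$ fall out as specializations and the bound collapses cleanly to $\binom{n-1}{d}$. Achieving this balance without any additive slack is the crux, since unlike in Theorem~\ref{thm: main} the conjecture asserts an exact inequality with no $O(n^{d-2})$ error term, leaving no room for the VC-sunflower losses that were absorbed there.
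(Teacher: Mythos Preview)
The statement you are attempting to prove is recorded in the paper as a \emph{conjecture}, not a theorem; the paper contains no proof of it, and only mentions that the source \cite{CXYZ} verifies it for $s\in\{0,d\}$ and for $s=1$ with $n$ large. So there is no ``paper's own proof'' to compare against.

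Your treatments of the two extreme cases are correct and are exactly the known verifications. For $s=0$ the certificate condition is precisely the intersecting property and EKR applies; for $s=d$ your set-pair argument with $A_F=T_F$ and $B_F=[n]\setminus F$ is valid, and the Bollob\'as inequality gives the exact bound $\binom{n-1}{d}$.

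For $0<s<d$, however, what you wrote is not a proof but a plan with self-identified obstacles, and those obstacles are real. The decomposition $\dF=\bigcup_T \dF_T$ together with the link-EKR bound $|\dF_T|\le\binom{n-s-1}{d-s}$ sums to $\binom{n}{s}\binom{n-s-1}{d-s}$, which overshoots $\binom{n-1}{d}$ by a factor of roughly $n/(d+1)$; as you note, this is far too weak. The proposed remedy---reselecting certificates to force $T_F\not\subseteq F'$ and then invoking a skew Bollob\'as inequality---does not go through, because for $s<d$ the implication ``$F'\cap F\ne T_F\Rightarrow T_F\not\subseteq F'$'' simply fails (there is room for $T_F\subsetneq F'\cap F$), and there is no evident mechanism to restore it by re-choosing $c$. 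Your fallback idea of an inductive link reduction $(n,d,s)\mapsto(n-1,d-1,s-1)$ also runs into the non-hereditary nature of the hypothesis that you yourself flag: deleting a vertex need not preserve the ``every $F$ has a size-$s$ certificate'' condition. In short, the intermediate range remains genuinely open, and nothing in your outline closes it; the final paragraph is an honest description of where the difficulty lies rather than a resolution of it.
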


In \cite{CXYZ}, Conjecture~\ref{conj: uniform} is verified for $s\in \{0,d\}$ and $s=1$ and $n$ large (compared to $d$).  

For the maximum size of a non-uniform set family with bounded VC-dimension, a foundational result, known as the the Shatter Function Theorem, was discovered independently in the early 1970s by Perles–Shelah  \cite{72Sh}, N. Sauer \cite{72Sa}, and Vapnik–Chervonenkis \cite{71VC}. It states that if the VC-dimension of $\dF\subseteq 2^{[n]}$ is $d$, then $|\dF|\le \sum_{i=0}^{d}\binom{n}{i}$. This bound is tight and admits several extremal constructions. A quantitative version of this result was given by Pajor \cite{85Pa}. 

A set family $\dF$ is a {\em Sperner family} if for any distinct members $A, B\in \dF$, $A\not\subseteq B$ and $B\not\subseteq A$. (The set families considered in Theorem~\ref{thm: main} are Sperner families as they are subfamilies of ${[n]\choose d+1}$.)   It is a long standing open problem in extremal set theory to determine the maximum size of a Sperner family with boudned VC-dimension. The following conjecture was made by Frankl \cite{89Fr} in 1989. 

\begin{conjecture} [Frankl, 1989] \label{con: sperner}
    Let $n,d$ be positive integers, and let $\dF\subseteq 2^{[n]}$ be a Sperner family. If $n\ge 2d$ and the VC-dimension of $\dF$ is at most $d$, then $|\dF|\le {n\choose d}$  
\end{conjecture}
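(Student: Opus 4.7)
The plan is to extend the injection-based framework of Theorem~\ref{thm: main} from uniform $(d+1)$-families to arbitrary Sperner families, with target $\binom{[n]}{d}$ (the conjectured extremal family). Stratify $\dF = \bigsqcup_k \dF_k$ by uniformity, where $\dF_k := \dF \cap \binom{[n]}{k}$, and treat the middle level $\dF_d$, the upper part $\dF_{>d}$, and the lower part $\dF_{<d}$ separately. The sets in $\dF_d$ already lie in $\binom{[n]}{d}$ and should map to themselves. For each $F \in \dF_{>d}$, since no $(d+1)$-subset of $F$ can be shattered, $F$ admits an $\dF$-certificate $T_F \subsetneq F$ with $|T_F| \le d$, and one aims to choose a distinguished $d$-set $g(F)$ with $T_F \subseteq g(F) \subseteq F$. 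For $F \in \dF_{<d}$, the Sperner condition guarantees $F \not\subseteq F'$ for any other $F' \in \dF$, so $F$ can be assigned some $d$-superset while avoiding the images chosen above.

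First, I would reduce to the case where $\dF$ is left-compressed by verifying that the standard shifting operator $S_{ij}$ preserves both the Sperner property and the VC-dimension bound (each is classical). Next, for each level $k \ge d+1$, I would apply the machinery of Lemmas~\ref{lem: good subfamily}--\ref{lem: map G' to 1} level by level to inject $\dF_k$ into $\binom{V_k}{d-1} \cup \binom{V_k}{d}$ for some $(n-2)$-subset $V_k$, and then lift along elements of $[n] \setminus V_k$ to land in $\binom{[n]}{d}$. Sets of size $< d$ would be handled by an iterative Sauer--Shelah-type argument: taking traces and using the VC-dim bound to show that $|\dF_{<d}|$ is much smaller than $\binom{n}{d}$, leaving abundant room for distinguished $d$-superset images chosen greedily.

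The main obstacle is coordinating these separate injections so that the images remain pairwise disjoint inside $\binom{[n]}{d}$, while arriving at the sharp bound $\binom{n}{d}$ rather than $(1 + o(1))\binom{n}{d}$. In the uniform setting of this paper the target is naturally structured by the split $V$ versus $[n] \setminus V$, but in the Sperner setting all three parts of $\dF$ compete for the same $d$-subsets. I would attempt this via a Hall's theorem argument on the bipartite graph whose left side is $\dF$ and right side is $\binom{[n]}{d}$, with edges given by the certificate-containment relation; verifying Hall's condition should reduce, via the uniform main theorem applied to each single level, to controlling shadow-interactions between the levels of $\dF$, which is precisely where the Sperner hypothesis enters essentially (two sets from different levels cannot participate in the same containment chain).

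I expect the hardest step to be removing the $O(n^{d-2})$ error term from Theorem~\ref{thm: main} in this context, since the Sperner conjecture demands an exact bound while the uniform bound is only asymptotic. A natural route is a stability argument: either $\dF_{d+1}$ is close to the extremal star $\binom{[n-1]}{d+1}$-type family (in which case Sperner forces $\dF_{<d+1}$ to be empty or nearly so, yielding the bound directly), or $\dF_{d+1}$ is far from extremal and one gains a linear savings from Corollary~\ref{cor: partial} that absorbs the error. Handling the lower part $\dF_{<d}$ rigorously, especially the exchange between an $F \in \dF_{<d}$ and its chosen $d$-superset, appears to require a new combinatorial idea beyond the tools developed here.
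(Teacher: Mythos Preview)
The statement you are attempting to prove is Conjecture~\ref{con: sperner}, which the paper presents as an \emph{open problem}, not as a theorem. The paper contains no proof of this statement; it only remarks that the conjecture is known for $d\le 3$ (Frankl for $d=1,2$; Anstee--Sali for $d=3$) and remains open in general. Consequently there is no ``paper's own proof'' against which to compare your proposal.

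Your proposal is honest about being a plan rather than a proof, and you yourself flag the decisive obstruction: Theorem~\ref{thm: main} carries an additive $O(n^{d-2})$ error, while Conjecture~\ref{con: sperner} demands an exact bound. Nothing in the paper's machinery (Lemmas~\ref{lem: good subfamily}--\ref{lem: map G' to 1}, Corollary~\ref{cor: partial}) removes that error even in the uniform case, so applying it level by level cannot yield a sharp result for the Sperner problem. Two further issues: (a) the claim that shifting $S_{ij}$ preserves the Sperner property is false in general (shifting can create containments), so that reduction step would need a different justification or a different compression; (b) the Hall-type argument you sketch would require, for every subfamily $\dF'\subseteq\dF$, that the $d$-sets reachable via your certificate-containment edges number at least $|\dF'|$, and verifying this is essentially equivalent in difficulty to the conjecture itself --- the uniform theorem applied level by level gives no control on how images from different levels overlap.

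In short, your outline identifies reasonable ingredients but does not close the gap, and since the statement is a genuine open conjecture, no complete argument is expected here.
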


Frankl \cite{89Fr} proved a general bound on Sperner families with bounded VC-dimension, which implies that Conjecture~\ref{con: sperner} holds for $k=1$ and $k=2$. Anstee and Sali \cite{97AS} proved Conjecture~\ref{con: sperner} for $k=3$. 

A Sperner family $\dF\subseteq 2^{[n]}$ may be viewed as an antichain in the lattice $2^{[n]}$. It is natural to  consider families $\dF\subseteq 2^{[n]}$ which has bounded VC-dimension and contains no chain of certain length. Indeed, Frankl~\cite{89Fr} made the following conjecture. 

\begin{conjecture}[Frankl, 1989]
    Let $n,d,\ell$ be positive integers such that $n+\ell\ge 2(d+1)$, and let $\dF\subseteq 2^{[n]}$ such that VC$(\dF)\le d$ and $\dF$ contains no chain of length $\ell+1$. Then $|\dF|\le \sum_{i=d-\ell}^d{n\choose i}$.

\end{conjecture}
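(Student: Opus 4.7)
The conjecture specializes at $\ell=0$ to the Sperner--VC Conjecture~\ref{con: sperner} and at $\ell\ge d$ to the classical Sauer--Shelah--Perles--Vapnik--Chervonenkis theorem (since for $\ell\ge d$ the stated bound collapses to $\sum_{i=0}^{d}\binom{n}{i}$). The plan is to attempt an induction on $\ell$, using $\ell\ge d$ as the base case (immediate from Sauer--Shelah) and, in the inductive step, to generalize the certificate-plus-shadow framework of Sections~2--3 of this paper from a single uniform level to the $\ell+1$ levels produced by Mirsky's chain-decomposition theorem.

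In the inductive step, let $\dF\subseteq 2^{[n]}$ with $\mathrm{VC}(\dF)\le d$ and no chain of length $\ell+1$. First, apply Mirsky's theorem to decompose $\dF=\mathcal{A}_0\cup \mathcal{A}_1\cup\cdots\cup \mathcal{A}_\ell$ into $\ell+1$ antichains, where $\mathcal{A}_j$ consists of those $F\in\dF$ for which the longest chain in $\dF$ ending at $F$ has exactly $j+1$ sets. Second, assign to each $F\in\dF$ a maximum $\dF$-certificate $c(F)\subsetneq F$ with $|c(F)|\le d$, exactly as in Section~2. Third, imitating the partition $\dF=\dF^1\cup\dF^2\cup\dF^3$ and the vector-space injection $f$ of Definition~\ref{dfn: f}, construct an injection
\[
\phi:\dF\longrightarrow \bigcup_{i=d-\ell}^{d}\binom{[n]}{i},
\]
where $\phi(F)$ is a subset of $[n]$ whose size lies in $[d-\ell,d]$, chosen from the triple $(F, c(F), j)$ for $F\in \mathcal{A}_j$. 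The Mirsky height $j$ is used to control the size window: sets at the lowest height are sent to $d$-subsets of $F$ containing $c(F)$, while sets at the highest height are allowed to descend as low as size $d-\ell$, so that each layer $\mathcal{A}_j$ contributes an image inside exactly one more level of the target than its predecessor.

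The main obstacle is to guarantee injectivity of $\phi$. Collisions occur in two modes. Within a single antichain $\mathcal{A}_j$, collisions should be controlled by a sunflower/star trichotomy directly analogous to Lemma~\ref{clm: aT details} and Claims~\ref{clm: type 2,3}--\ref{clm: type 1} of the paper, lifted from the uniform $(d+1)$-level to an arbitrary level of the lattice. Across different antichains $\mathcal{A}_j,\mathcal{A}_{j'}$, collisions should be broken by using the Mirsky height itself as an extra coordinate, together with the existence of a comparable set that witnesses the difference in heights. A secondary obstacle is that the induction base at $\ell=0$ coincides with Conjecture~\ref{con: sperner}, itself open for $d\ge 4$; to avoid invoking it, one should carry the inductive step $\ell\to \ell-1$ strongly enough that the case $\ell=0$ is never needed, or alternatively run a joint induction on $(n,\ell)$ by peeling off an element $x\in [n]$ and analyzing $\dF_{\bar x}$ and the link $\{F\setminus\{x\}:F\in\dF,\,x\in F\}$. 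The catch with the latter approach, which is likely to be the decisive technical difficulty, is that chain length can \emph{increase} when one collapses the link with $\dF_{\bar x}$, so the chain-length hypothesis does not pass cleanly to a smaller ground set and must be compensated by a finer bookkeeping, perhaps by simultaneously tracking the VC-dimension and the chain-length profile.
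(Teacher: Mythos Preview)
The statement you are addressing is a \emph{conjecture}, not a theorem: the paper merely records Frankl's 1989 conjecture as an open problem in the concluding section and offers no proof or proof sketch for it. There is therefore no ``paper's own proof'' to compare your proposal against.

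As for the proposal itself, it is a plan rather than a proof, and you have correctly identified its own fatal gap: your induction on $\ell$ bottoms out at $\ell=0$, which is exactly Conjecture~\ref{con: sperner} (the Sperner--VC conjecture), open for $d\ge 4$. Your suggested workaround---running the induction downward from $\ell\ge d$ so that $\ell=0$ is never invoked---does not help, since a downward induction from the Sauer--Shelah base would need, at each step, to \emph{strengthen} the bound from $\sum_{i=d-\ell}^{d}\binom{n}{i}$ to $\sum_{i=d-\ell+1}^{d}\binom{n}{i}$ when the chain-length constraint tightens, and nothing in the certificate/shadow machinery of Sections~2--3 gives a mechanism for gaining $\binom{n}{d-\ell}$ in this direction. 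The alternative $(n,\ell)$-induction via link/deletion you mention is, as you note, blocked by the failure of the chain-length hypothesis to pass to the link. In short, your proposal is an honest outline of why the conjecture is hard, not a route to a proof; the paper does not claim otherwise.
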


\bibliographystyle{unsrt}

\end{document}